\theoremstyle{definition}
\newtheorem{definition}{Definition}[section]
\newtheorem{remark}[definition]{Remark}
\theoremstyle{plain}
\newtheorem{theorem}[definition]{Theorem}
\newtheorem{proposition}[definition]{Proposition}
\newtheorem{lemma}[definition]{Lemma}
\newtheorem{corollary}[definition]{Corollary}
\numberwithin{equation}{section}
\newcommand{\qisim}{\mathrel{\underset{{\rm q.i.}}{\sim}}}
\title[Quasimorphisms on $\mathrm{Diff}_0(N_g)$]{Quasimorphisms on nonorientable surface diffeomorphism groups}
\author[M.~Kimura]{Mitsuaki Kimura}
\address[Mitsuaki Kimura]{Department of Mathematics, Osaka Dental University, 8-1 Kuzuha Hanazono-cho, Hirakata, Osaka 573-1121, Japan}
\email{kimura-m@cc.osaka-dent.ac.jp}
\author[E.~Kuno]{Erika Kuno}
\address[Erika Kuno]{Department of Mathematics,
Graduate School of Science,
Osaka University,
1-1 Machikaneyama-cho Toyonaka, Osaka 560-0043, Japan
}
\email{e-kuno@math.sci.osaka-u.ac.jp}
\date{}
\keywords{quasimorphism; diffeomorphism group; fine curve graph; nonorientable surface}
\subjclass[2020]{20F65, 57K20, 57S05}
\begin{document}
\begin{abstract}
Bowden, Hensel, and Webb constructed infinitely many quasimorphisms on the diffeomorphism groups of orientable surfaces. In this paper, we extend their result to nonorientable surfaces. 
Namely, we prove that the space of nontrivial quasimorphisms $\widetilde{QH}(\mathrm{Diff}_0(N_g))$ on the identity component of the diffeomorphism group $\mathrm{Diff}_0(N_g)$ on a closed nonorientable surface $N_g$ of genus $g\geq 3$ is infinite-dimensional. 
As a corollary, we obtain the unboundedness of the commutator length and the fragmentation length on $\mathrm{Diff}_0(N_g)$.
\end{abstract}
\maketitle

\section{Introduction} \label{Intro}

Geometric group theory studies infinite groups from a geometric point of view.
It has mainly focused on finite generated groups, but recently its method has also been applied to the study of diffeomorphism groups.

Let $\mathrm{Diff}_0(S)$ denote the group of diffeomorphisms on a closed orientable surface $S$ which are isotopic to the identity.
In \cite{Bowden--Hensel--Webb22}, Bowden, Hensel, and Webb introduced 
the \textit{fine curve graph} $C^{\dagger}(S)$ of $S$
and proved that the graph $C^{\dagger}(S)$ is Gromov hyperbolic and the action of the group $\mathrm{Diff}_0(S)$ on $C^{\dagger}(S)$ satisfies the condition of Bestvina--Fujiwara \cite{Bestvina--Fujiwara07} (see Proposition \ref{prop:BF}).
As a consequence, the group $\mathrm{Diff}_0(S_g)$ admits infinitely many nontrivial quasimorphisms for every closed orientable surface $S_g$ of genus $g\geq 1$.

In this paper, we extend the result of Bowden, Hensel and Webb to every closed nonorientable surface $N_{g}$ of genus $g \geq 3$.

\begin{theorem} \label{thm:main}
  For $g\geq 3$, the space of nontrivial quasimorphisms $\widetilde{QH}(\mathrm{Diff}_0(N_g))$ on $\mathrm{Diff}_0(N_g)$ is infinite-dimensional.
\end{theorem}

For the definition of quasimorphism, see Section \ref{subsec:qm}.
As in \cite{Bowden--Hensel--Webb22}, 
we consider the fine curve graph $C^{\dagger}(N)$ for a nonorientable surface $N$ and prove the following:
\begin{itemize}
    \item The graph $C^{\dagger}(N)$ is Gromov-hyperbolic (Theorem \ref{unif_hyp_fine_curve_graph}).
    \item The action of $\mathrm{Diff}_0(N_g)$ on $C^{\dagger}(N)$ satisfies the condition of Bestvina--Fujiwara.
\end{itemize}
 To prove 
 the former, we introduce the notion of the extended fine curve graph $\mathcal{C}^{\pm \dagger}(N)$ and the extended two-sided fine curve graph
$\mathcal{C}^{\pm\dagger}_{\mathrm{two}}(N)$, and prove that they are quasi-isometric to $C^{\dagger}(N)$ (Proposition~\ref{fine_curve_graphs_are_quasi-isometric}).
To observe the latter, 
we will use the theorem of Ichihara and Motegi \cite{{Ichihara--Motegi05}} (Theorem \ref{thm:IM}) instead of Kra's theorem \cite{Kra81} (Theorem \ref{thm:Kra}), which was used in the case of orientable surfaces \cite{Bowden--Hensel--Webb22}.


We discuss an application of Theorem \ref{thm:main}.
It is known that quasimorphisms are useful for studying the large scale geometry of groups. 

For $x \in [G,G]$, the \textit{commutator length} ${\rm cl}(x)$ of $x$
is the least number of commutators in $G$ whose product is equal to $x$. 
For a homogeneous quasimorphism $\phi$, we have $|\phi(x)| \leq 2D(\phi) \cdot {\rm cl}(x) $ by elementary calculations. Thus, ${\rm cl}$ is unbounded if $G$ admits a nontrivial quasimorphism.
Since ${\rm Diff}_0(M)$ is a perfect group \cite{Thurston74} (see also \cite{Mann16}), the commutator length ${\rm cl}$ is defined on the whole group ${\rm Diff}_0(M)$.


Diffeomorphism groups also admits a natural norm called the \textit{fragmentation norm}.
For a closed manifold $M$, any diffeomorphism $f \in {\rm Diff}_0(M)$ can be decomposed into a product of diffeomorphisms supported on a ball (see \cite{Banyaga97, Mann16} for example). 
The fragmentation norm $\|f\|_{\mathrm{Frag}}$ is defined as the minimal number of such factors needed to express $f$.

Since every diffeomorphism on a ball with compact support can be written as a product of 2 commutators \cite{Tsuboi08}, we have ${\rm cl}(f) \leq 2 \| f \|_{\rm Frag}$. 
Thus, $\| \cdot \|_{\rm Frag}$ is unbounded if ${\rm cl}$ is unbounded. 
Therefore, as in the case of orientable surfaces \cite{Bowden--Hensel--Webb22}, we obtain the following corollary of Theorem \ref{thm:main}.

\begin{corollary} \label{cor:frag}
For $g \geq 3$, the commutator length and the fragmentation norm are unbounded on $\mathrm{Diff}_0(N_g)$.
\end{corollary}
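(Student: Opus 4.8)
The plan is to deduce both unboundedness statements formally from Theorem \ref{thm:main} together with the standard facts about quasimorphisms, commutator length, and the fragmentation norm that were recalled above, following the same route as in \cite{Bowden--Hensel--Webb21}. First, Theorem \ref{thm:main} provides in particular a single nontrivial homogeneous quasimorphism $\phi$ on $\mathrm{Diff}_0(N_g)$ for $g \geq 3$. Since a bounded homogeneous quasimorphism vanishes identically, $\phi$ is unbounded, so there is a sequence $x_n \in \mathrm{Diff}_0(N_g)$ with $|\phi(x_n)| \to \infty$.

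Next, I would invoke perfectness of $\mathrm{Diff}_0(N_g)$ (Thurston \cite{Thurston74}), so that every element is a product of commutators and $\mathrm{cl}$ is defined on the whole group. Applying the elementary inequality $|\phi(x)| < 2 D(\phi)\cdot\mathrm{cl}(x)$ with $x = x_n$ forces $\mathrm{cl}(x_n) \to \infty$, which shows that commutator length is unbounded on $\mathrm{Diff}_0(N_g)$.

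Finally, for the fragmentation norm I would write $x_n = g_1 \cdots g_k$ with $k = \|x_n\|_{\mathrm{Frag}}$ and each $g_i$ supported in a ball. By Tsuboi's theorem \cite{Tsuboi08} each $g_i$ is a product of two commutators, so $\mathrm{cl}(x_n) \le 2\|x_n\|_{\mathrm{Frag}}$; together with $\mathrm{cl}(x_n) \to \infty$ this gives $\|x_n\|_{\mathrm{Frag}} \to \infty$. I do not expect any genuine obstacle here: the entire mathematical content is Theorem \ref{thm:main}, and the corollary is a formal consequence. The only points worth stating with care are the perfectness of $\mathrm{Diff}_0(N_g)$ (so that $\mathrm{cl}$ is everywhere defined) and the vanishing of bounded homogeneous quasimorphisms (so that nontriviality of $\widetilde{QH}(\mathrm{Diff}_0(N_g))$ indeed yields an unbounded $\phi$).
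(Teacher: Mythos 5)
Your argument is correct and follows exactly the same route as the paper: use that a nonzero homogeneous quasimorphism is unbounded, combine the inequality $|\phi(x)| < 2D(\phi)\cdot\mathrm{cl}(x)$ (valid on $[G,G]=G$ by Thurston's perfectness) to get unbounded commutator length, and then use Tsuboi's result to obtain $\mathrm{cl}(f) \leq 2\|f\|_{\mathrm{Frag}}$ and conclude that the fragmentation norm is unbounded. The only difference is cosmetic: you phrase it via an explicit sequence $x_n$, whereas the paper states the two implications abstractly.
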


Corollary \ref{cor:frag}  contrasts with the fact that both the commutator length and the fragmentation norm are bounded for any closed manifold $M$ of dimension other than 2 or 4 \cite{BIP08, Tsuboi08, Tsuboi12}.

Let ${\rm Homeo}_0(M)$ denote the group of homeomorphisms which are isotopic to the identity. As in \cite{Bowden--Hensel--Webb22}, we can also prove the case of homeomorphism groups.

\begin{theorem} \label{thm:homeo}
  For $g\geq 3$, the space of nontrivial quasimorphisms  $\widetilde{QH}(\mathrm{Homeo}_0(N_g))$ is infinite-dimensional.
\end{theorem}

To prove Theorem~\ref{thm:homeo}, we use the automatic continuity of homogeneous quasimorphisms on $\mathrm{Diff}_0(N_g)$ by Kotschick (Theorem \ref{thm:conti}).
We can also prove Theorem \ref{thm:homeo} by using the topological fine curve graph $C^{\dagger}(N_g)_{\rm top}$ (see Section \ref{Topological fine curve graph}).

At the end of the introduction, we mention recent developments after our work. For (possibly nonorientable) surfaces $F$ with non-empty boundary such that $\chi(F)<0$, Bowden, Hensel, and Webb proved that $\widetilde{QH}(\mathrm{Homeo}_0(F,\partial F))$ is non-trivial \cite{Bowden--Hensel--Webb24}.
After their work, B\"{o}ke proved that the space $\widetilde{QH}(\mathrm{Homeo}_0(N_2))$ is non-trivial \cite{Boke}. 
The authors do not know whether the space $\widetilde{QH}(\mathrm{Homeo}_0(N_1))$ is trivial or not.

\section{Preliminaries}

Let $N=N_{g,n}$ (resp. $S=S_{g,n}$) be a connected finite type nonorientable (resp. orientable) surface of genus $g$ with $n$ punctures. 
We will abbreviate $N_{g,0}$ as $N_{g}$ 
and $S_{g,0}$ as $S_{g}$. 
If we are not concerned with whether a given surface is orientable or not, we write $F$ for the surface.

\subsection{Pseudo-Anosov mapping class}

Let ${\rm MCG}(F)$ be the mapping class group of a surface $F$, that is, the group of isotopy classes of (orientation preserving if $F$ is orientable) self-homeomorphisms on $F$.
For a homeomorphism $\varphi$ on $F$ (orientation preserving if $F$ is orientable),
$[\varphi]_F \in {\rm MCG}(F)$ denotes the mapping class of $\varphi$. 

A homeomorphism $\phi \colon F \to F$ is called \emph{pseudo-Anosov} if there exists a transverse pair of measured foliations on $F$ and a real number $\lambda > 1$ such that 
the foliations are preserved by $\phi$ and their transverse measures are multiplied by $1/\lambda$ and $\lambda$.
An element $f \in {\rm MCG}(F)$ is called \emph{pseudo-Anosov} if there is a pseudo-Anosov homeomorphism $\phi \colon F \to F$ such that $f=[\phi]_F$.

Let $S$ be the orientable double cover of a nonorientable surface $N$, and let 
$\pi \colon S \to N$ be the covering map.
The map $\pi$ induces the injective homomorphism $\iota \colon {\rm MCG}(N) \to {\rm MCG}(S)$ 
(see \cite{Birman--Chillingworth72,Goncalves--Guaschi--Maldonado18,Szepietowski10}).

A mapping class $f \in{\rm MCG}(N)$ is of finite order, is reducible, or is pseudo-Anosov if and only if its image under 
$\iota$ has the corresponding property in ${\rm MCG}(S)$ (see Wu~\cite{Wu87}).
Moreover, according to Thurston~\cite{Thurston88}, 
the Nielsen--Thurston classification theorem also holds for nonorientable surfaces (see \cite{Wu87} for the proof).

\begin{theorem}{$($\cite{Thurston88}, \cite[Theorem 2]{Wu87}$)$} 
A homeomorphism $\phi$ on $N$ is reduced and is not of finite order if and
only if $\phi$ is isotopic to a pseudo-Anosov homeomorphism.
\end{theorem}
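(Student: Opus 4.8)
The plan is to transfer the statement across the orientation double covering $\pi\colon S\to N$, combining the classical Nielsen--Thurston classification \cite{Thurston88} on the orientable surface $S$ with Wu's detection result \cite{Wu87} recalled above: a mapping class $f\in\mathrm{MCG}(N)$ is of finite order (resp.\ reducible, pseudo-Anosov) if and only if $\iota(f)\in\mathrm{MCG}(S)$ is. Since $\iota$ is an injective homomorphism and each Nielsen--Thurston type of $f$ is detected by $\iota(f)$, everything can be checked after lifting to $S$.

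For the direction ($\Leftarrow$), I would argue that if $f$ is isotopic to a pseudo-Anosov homeomorphism then $\iota(f)$ is pseudo-Anosov on $S$; a pseudo-Anosov class on an orientable surface is neither of finite order (it stretches one of its invariant measured foliations, so no power is trivial) nor reducible (it fixes no isotopy class of an essential multicurve), and by Wu's theorem these two properties descend to $f$, so $f$ is reduced and not of finite order. For the direction ($\Rightarrow$), suppose $f$ is reduced and not of finite order; by Wu's theorem $\iota(f)$ is then neither reducible nor of finite order, so the classical Nielsen--Thurston trichotomy on $S$ forces $\iota(f)$ to be pseudo-Anosov, and one further application of Wu's theorem yields that $f$ is pseudo-Anosov, i.e.\ isotopic to a pseudo-Anosov homeomorphism of $N$.

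The main obstacle lies not in this formal deduction but inside Wu's theorem, specifically in the equivalence of reducibility on $N$ with reducibility on $S$. The delicate point is that an $f$-invariant essential multicurve on $N$ (which may contain one-sided curves) has to be shown to lift to an $\iota(f)$-invariant essential multicurve on $S$, and conversely an $\iota(f)$-invariant multicurve on $S$ has to be isotoped to one preserved by the deck involution so that it projects to an $f$-invariant multicurve on $N$; here one uses that $\iota(f)$ is realized by a lift of $f$ commuting with the deck transformation. I would also dispose of the small cases $g=1,2$ separately: there $\mathrm{MCG}(N)$ is finite, so there is neither a reduced non-finite-order class nor a pseudo-Anosov class and the statement holds vacuously. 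For $g\ge 3$, $S=S_{g-1}$ has genus $\ge 2$ and the argument above applies directly.
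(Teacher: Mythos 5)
The paper does not actually prove this statement: it is quoted from Thurston \cite{Thurston88} and Wu \cite[Theorem 2]{Wu87}, with the remark that the proof is found in Wu's paper. So there is no in-paper argument to compare against; your sketch is the natural deduction from Wu's detection result (the sentence just before the theorem) together with the classical Nielsen--Thurston trichotomy on the orientable double cover, which is essentially Wu's own route. Your forward direction is clean and uses only what the paper quotes: injectivity of $\iota$ rules out finite order, the quoted implication ``$\iota(f)$ reducible $\Rightarrow$ $f$ reducible'' rules out reducibility of $\iota(f)$, the trichotomy on $S$ makes $\iota(f)$ pseudo-Anosov, and the quoted implication ``$\iota(f)$ pseudo-Anosov $\Rightarrow$ $f$ pseudo-Anosov'' concludes.

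Two caveats. First, your backward direction leans on the converse implications ``$f$ pseudo-Anosov $\Rightarrow \iota(f)$ pseudo-Anosov'' and ``$f$ reducible $\Rightarrow \iota(f)$ reducible,'' which are not the directions quoted in the paper; the second one (lifting an $f$-invariant multicurve that may contain one-sided components to an $\iota(f)$-invariant essential multicurve on $S$) is exactly the delicate content of Wu's theorem that you flag but defer, so as written this half is a reduction to the cited literature rather than a proof. It can be avoided entirely by arguing intrinsically on $N$: a pseudo-Anosov map has dilatation $\lambda>1$, so no power of it is isotopic to the identity, and it preserves no essential multicurve because the intersection numbers of any curve with its invariant measured foliations grow like $\lambda^n$ under iteration; this gives ``isotopic to pseudo-Anosov $\Rightarrow$ irreducible and of infinite order'' without the double cover. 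Second, your disposal of the small-genus cases is not correct in the generality of the statement: ${\rm MCG}(N_{g,n})$ is infinite (and contains pseudo-Anosov classes) for $g=1,2$ once there are enough punctures (e.g.\ $N_{1,3}$), so the statement is not vacuous there; this is harmless for the closed surfaces $N_g$, $g\geq 3$, that the paper actually uses, but the case analysis should be phrased in terms of the complexity of $N-$ (equivalently, negative Euler characteristic of the cover) rather than finiteness of the mapping class group. Finally, note that ``reduced'' in the paper's wording should be read as ``irreducible,'' as you implicitly do; with the reading ``reducible'' the statement would be false.
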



A closed curve $c$ in $F$ is \textit{filling} if every curve isotopic to $c$ intersects every essential closed curve in $F$. Note that $c$ is filling if and only if $F-c$ is homeomorphic to a disjoint union of disks.

The following result by Kra \cite{Kra81} provides a criterion for constructing pseudo-Anosov mapping classes on orientable surfaces:

\begin{theorem}{$($\cite{Kra81}$)$} \label{thm:Kra}
Let $S$ be a compact orientable surface with negative Euler characteristics and $p \in S$  a point.
Let $\varphi \in {\rm Diff}_0(S)$ such that $\varphi(p)=p$ and $\{\varphi_t\}_{0\leq t\leq 1}$ an isotopy from ${\rm id_S}$ to $\varphi$. 
Then, the mapping class $[\varphi]_{S-{p}} \in {\rm MCG}(S-{p})$ is pseudo-Anosov if and only if the curve $\{ \varphi_t (p) \}_{0\leq t\leq 1}$ is filling.
\end{theorem}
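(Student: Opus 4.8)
The plan is to realize $[\varphi]_{S-p}$ as a point‑pushing mapping class and to read off its Nielsen--Thurston type from the Birman exact sequence together with one geometric input about point pushing.

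First I would set $\gamma(t)=\varphi_t(p)$; since $\varphi_0=\mathrm{id}_S$ and $\varphi_1(p)=p$ this is a loop in $S$ based at $p$. Because $\chi(S)<0$, the evaluation fibration $\mathrm{Diff}_0(S)\to S$, $f\mapsto f(p)$, produces the Birman exact sequence
\[
1\longrightarrow \pi_1(S,p)\xrightarrow{\ \mathrm{Push}\ }\mathrm{MCG}(S-p)\longrightarrow\mathrm{MCG}(S)\longrightarrow 1,
\]
where $\mathrm{Push}$ is the point‑pushing homomorphism; the path $\{\varphi_t\}$ is precisely the lift of the loop $\gamma$ to $\mathrm{Diff}_0(S)$ starting at $\mathrm{id}_S$, so chasing the sequence identifies $[\varphi]_{S-p}$ with $\mathrm{Push}([\gamma])$. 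Since $\pi_1(S,p)$ is torsion free and $\mathrm{Push}$ is injective, $\mathrm{Push}([\gamma])$ has infinite order whenever $\gamma$ is not nullhomotopic; as $\chi(S-p)\le -2$, the Nielsen--Thurston trichotomy then reduces the theorem to the assertion that $\mathrm{Push}([\gamma])$ preserves the isotopy class of no essential multicurve in $S-p$ if and only if $\gamma$ is filling.

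For the ``only if'' direction I would argue contrapositively: if $\gamma$ is not filling then, by definition, some essential simple closed curve $a$ of $S$ is disjoint (up to isotopy) from $\gamma$, hence from $p$; one checks $a$ remains essential and non‑peripheral in $S-p$, and the push along $\gamma$ is representable by a diffeomorphism supported in the complement of $a$, so $\mathrm{Push}([\gamma])$ fixes the class of $a$, whence $\mathrm{Push}([\gamma])$ preserves an essential multicurve and is not pseudo‑Anosov. For the ``if'' direction, assume $\gamma$ fills and suppose for contradiction that $\mathrm{Push}([\gamma])$ preserves an essential multicurve; let $R_0$ be the component of the complement of this multicurve in $S$ that contains $p$. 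The key lemma I would need is that $\mathrm{Push}([\gamma])$ preserves the multicurve up to isotopy in $S-p$ if and only if $\gamma$ can be freely homotoped into $R_0$, equivalently $[\gamma]$ is conjugate in $\pi_1(S,p)$ into the image of $\pi_1(R_0)$. The implication ``$\Leftarrow$'' is immediate, since then the push can be performed inside $R_0$. The implication ``$\Rightarrow$'' is the geometric heart of the argument and is where I expect the main difficulty: it says that a point push along a loop that essentially crosses a component of the multicurve cannot fix that component up to isotopy. I would attack it either via the explicit expression of a point push as a product of Dehn twists about the two boundary curves of a regular neighborhood of $\gamma$ in $S-p$, combined with geometric intersection‑number estimates, or by lifting the push to the cover of $S-p$ associated with $R_0$; alternatively one can bypass the lemma by noting that the mapping torus of $\mathrm{Push}([\gamma])$ is $(S\times S^1)$ minus the suspension knot of $\{\varphi_t(p)\}$ and invoking Thurston's hyperbolization, the point being that this $3$‑manifold is atoroidal exactly when $\gamma$ fills.

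Granting the lemma, $\gamma$ is homotopic into $R_0$. If some component $c$ of the multicurve is essential in $S$, then $\gamma$, lying in the region $R_0$ which is disjoint from $c$, misses the essential curve $c$, contradicting that $\gamma$ fills. Otherwise every component of the multicurve is inessential in $S$; being also non‑peripheral in $S-p$, each such component bounds a disk of $S$ containing $p$, so these disks are nested and $R_0$ lies inside the innermost one, whence $\gamma$ is homotopic into an embedded disk of $S$ and again misses every essential curve --- contradicting filling. Hence $\mathrm{Push}([\gamma])$ preserves no essential multicurve and is pseudo‑Anosov. Everything except the ``$\Rightarrow$'' half of the key lemma is bookkeeping with the Birman exact sequence and elementary surface topology.
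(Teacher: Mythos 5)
The paper does not prove this theorem; it is cited as a black box from Kra's 1981 paper, whose argument is Teichm\"uller-theoretic (extremal quasiconformal maps and the Bers classification). Your outline is a genuinely different, topological route: identify $[\varphi]_{S-p}$ with a point-push $\mathrm{Push}([\gamma])$ via the Birman exact sequence, eliminate finite order using torsion-freeness of $\pi_1(S)$ and injectivity of $\mathrm{Push}$, and reduce the theorem to the claim that $\mathrm{Push}([\gamma])$ is reducible in $S-p$ if and only if $\gamma$ fails to fill $S$. Your ``not filling implies reducible'' direction is complete. But the converse is where essentially all the content lives, and there you compress everything into a key lemma --- that $\mathrm{Push}([\gamma])$ preserves an essential multicurve in $S-p$ only if $\gamma$ is homotopic into the complementary component $R_0$ containing $p$ --- whose nontrivial direction you explicitly leave unproved, offering only a menu of candidate strategies (twist formula for pushes along simple loops, covering-space arguments, atoroidality of the mapping torus via hyperbolization). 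None of these is carried out, and that implication is roughly as hard as the theorem itself; as it stands, you have a correct \emph{reduction}, not a proof.

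There is also a small error in your final case split. You consider the possibility that every component of the invariant multicurve is inessential in $S$ and conclude such a component bounds a disk of $S$ containing $p$. But a simple closed curve in $S-p$ that bounds a disk of $S$ containing $p$ is exactly a puncture-parallel curve, hence peripheral and not an allowed component of an essential multicurve in $S-p$. So this branch is vacuous: every component of the multicurve is already essential in $S$, the first branch suffices, and the nested-disk argument is both unnecessary and, as written, based on a misreading of ``essential in $S-p$.'' This does not affect the conclusion, but it should be cleaned up. The central missing item remains the forward direction of the key lemma.
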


Let $P=\{ p_1 ,\dots , p_n\} \subset S$ be a finite set of points. 
Given a diffeomorphism $\varphi \in {\rm Diff}_0(S)$ such that $\varphi(P)=P$ and an isotopy $\{\varphi_t\}_{0\leq t\leq 1}$ from ${\rm id_S}$ to $\varphi$,
we say that a system of closed curves $\mathcal{C}=\{c_1,\dots,c_m\}$ in $S$ is \textit{associated with $\varphi$} if 
it consists of the paths $\{ \varphi_t(p_i) \}_{0 \leq t \leq 1}$ for each $i=1,\dots,n$.
Two systems of closed curves $\mathcal{C}=\{c_1,\dots,c_m\}$ and
$\mathcal{C}'=\{c'_1,\dots,c'_m\}$ are \textit{equivalent} 
if $c_i$ is homotopic to $c'_i$ for $i = 1, \cdots ,m$. 
A system of curves $\mathcal{C}=\{c_1,\dots,c_m\}$ in $S$ is said to be \textit{filling} if 
$S-\cup_{i=1}^m c'_i$ is homeomorphic to a disjoint union of disks for 
every system $\mathcal{C'}=\{c'_1,\dots,c'_m\}$ equivalent to $\mathcal{C}$.

Ichihara and Motegi~\cite{Ichihara--Motegi05} extended Kra's theorem (Theorem \ref{thm:Kra}) to orientable surfaces with multiple punctures.

\begin{theorem}{$($\cite[Corollary 1.3]{Ichihara--Motegi05}$)$} \label{thm:IM}
Let $S$ be a compact orientable surface with negative Euler characteristics and $P=\{ p_1 ,\dots , p_n\} \subset S$ a finite set of points in $S$. 
Let $\varphi \in {\rm Diff}_0(S)$ such that $\varphi(P)=P$ and $\mathcal{C}=\{c_1,\dots,c_m\}$ a system of curves associated with $\varphi$. If $\mathcal{C}$ is filling and satisfies
\begin{itemize}
    \item every $c_i$ is primitive (i.e., not freely homotopic to a $k$-fold closed curve $c_i^k$ of $c_i$ with $k \geq 2$), 
    \item $c_i$ and $c_j$ are not freely homotopic for $i \neq j$, and 
    \item $c_i$ cannot be homotoped into $\partial S$, 
\end{itemize}
then the mapping class $[\varphi]_{S-P} \in {\rm MCG}(S-P)$ is pseudo-Anosov. 
\end{theorem}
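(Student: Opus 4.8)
The plan is to verify the Nielsen--Thurston criterion: the mapping class $[\varphi]_{S-P}$ is pseudo-Anosov as soon as it is neither of finite order nor reducible. Equivalently --- following Kra's strategy \cite{Kra81} for the one-point case --- one realizes $[\varphi]_{S-P}$ as a \emph{multi-point pushing map} and shows its mapping torus is hyperbolic. Since $\varphi\in\mathrm{Diff}_0(S)$ we have $[\varphi]_S=\mathrm{id}$, so $[\varphi]_{S-P}$ lies in the kernel of the forgetful homomorphism $\mathrm{MCG}(S-P)\to\mathrm{MCG}(S)$; by the (generalized) Birman exact sequence this kernel is isomorphic to $\pi_1$ of the configuration space $\mathrm{Conf}_n(S)$, and $[\varphi]_{S-P}$ corresponds to the point-pushing map along the loop $\beta\colon t\mapsto(\varphi_t(p_1),\dots,\varphi_t(p_n))$, whose trace on $S$ is precisely the system $\mathcal C$. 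Dually, the mapping torus of $[\varphi]_{S-P}$ is homeomorphic to $(S\times S^1)\setminus L$, where $L=\bigcup_i\{(\varphi_t(p_i),[t])\}$ is the suspension link of $P$ along the isotopy, and $L$ projects onto $\mathcal C$ under $S\times S^1\to S$.

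First I would dispose of finite order. Since $\chi(S)<0$ the surface braid group $\pi_1(\mathrm{Conf}_n(S))$ is torsion-free and the Birman map is injective, so it suffices that $\beta$ be non-null-homotopic. But a null-homotopic $\beta$ traces out a system of null-homotopic curves, and such a system is never filling: realizing the curves as disjoint embedded circles, cutting $S$ along them leaves its (negative) Euler characteristic unchanged, so the complement cannot be a disjoint union of disks. Hence $[\varphi]_{S-P}$ has infinite order. (Equivalently, the mapping torus is aspherical with infinite fundamental group, so in particular irreducible.)

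The bulk of the work is to show that $[\varphi]_{S-P}$ fixes the isotopy class of no essential simple closed curve (and, if $\partial S\neq\emptyset$, no essential arc) in $S-P$. Suppose $\alpha$ were such a curve with $[\varphi]_{S-P}(\alpha)\simeq\alpha$; I would argue by the position of $\alpha$ in $S$. If $\alpha$ is essential in $S$, then the pushing map fixing $[\alpha]$ in $S-P$ means the loop $\beta$, hence the system $\mathcal C$, can be freely homotoped off $\alpha$; then some system $\mathcal C'$ equivalent to $\mathcal C$ is disjoint from $\alpha$, so the complementary component of $\alpha$ contains the essential curve $\alpha$ and is therefore not a disk --- contradicting that $\mathcal C$ fills $S$. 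If $\alpha$ is inessential in $S$ then, being essential in $S-P$, it bounds a disk (or cobounds an annulus with a boundary component) enclosing a proper nonempty subset $\{p_i:i\in I\}$ of the marked points, and invariance of $\alpha$ forces the subsystem $\{c_i:i\in I\}$ to be homotopic into that disk, resp.\ annular collar; but a curve homotopic into a disk is null-homotopic and hence not primitive, and a curve homotopic into a collar of $\partial S$ can be homotoped into $\partial S$ --- both cases are excluded by hypothesis. Finally, if two marked points are pushed around parallel loops then $c_i\simeq c_j$ for some $i\neq j$, and the corresponding sublink bounds an essential annulus joining two cusps of $(S\times S^1)\setminus L$; this is excluded by the hypothesis $c_i\not\simeq c_j$. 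Thus no invariant essential curve exists, $[\varphi]_{S-P}$ is neither periodic nor reducible, and by the Nielsen--Thurston classification it is pseudo-Anosov. (In three-dimensional language: the three technical hypotheses say exactly that $(S\times S^1)\setminus L$ is irreducible, atoroidal, contains no essential or $\partial$-parallel annulus, and is not Seifert-fibered, so Thurston's hyperbolization theorem \cite{Thurston88} applies and $[\varphi]_{S-P}$ is pseudo-Anosov.)

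The hard part will be the claim used in the "essential in $S$" case: that the pushing map fixes $[\alpha]$ in $S-P$ if and only if $\beta$ (equivalently $\mathcal C$) can be homotoped off $\alpha$. This is the geometric core of the multi-point generalization of Kra's theorem, and I would prove it by an innermost-disk argument in the annular cover of $S-P$ determined by $\langle\alpha\rangle$ --- or, in the mapping-torus formulation, by putting an essential torus into normal form with respect to the $S^1$-fibration of $(S\times S^1)\setminus L$ and reading off which complementary curve of $\mathcal C$, or which pair of parallel components of $L$, it exhibits. The remaining step --- matching each of the three technical hypotheses to precisely one non-hyperbolic configuration, and tracking essential arcs and $\partial$-parallel annuli when $\partial S\neq\emptyset$ --- is routine bookkeeping but must be carried out with some care.
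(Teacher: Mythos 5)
This statement is not proved in the paper; it is quoted verbatim as a cited result from Ichihara--Motegi \cite[Corollary 1.3]{Ichihara--Motegi05}, so there is no proof of it in the paper to compare against. Evaluating your sketch on its own merits: the framework is correct and is essentially the one Ichihara and Motegi use (and close in spirit to Kra's original argument). Identifying the kernel of the forgetful map $\mathrm{MCG}(S-P)\to\mathrm{MCG}(S)$ with $\pi_1(\mathrm{Conf}_n(S))$ via the Birman exact sequence, disposing of finite order via torsion-freeness of the surface braid group when $\chi(S)<0$, and then ruling out reducibility by a case analysis on a putative invariant curve $\alpha$ (essential in $S$, inessential but enclosing marked points, or arising from parallel components) matches each of the three technical hypotheses to one degenerate configuration exactly as intended. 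Your parenthetical translation into the three-manifold picture --- that the hypotheses are precisely what make $(S\times S^1)\setminus L$ irreducible, atoroidal, anannular and non-Seifert-fibered, so Thurston hyperbolization applies --- is also the right high-level reading.

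That said, you have correctly flagged where the real work is: the claim that $T_\beta$ fixing $[\alpha]$ (with $\alpha$ essential in $S$) forces $\mathcal C$ to be homotopable off $\alpha$. This is not a formality; it is the entire content of Kra's theorem in the one-point case, and your proposal does not supply an argument for it --- the innermost-disk-in-the-annular-cover and the normal-form-torus suggestions are plausible strategies, but as written they are placeholders. A second, smaller gap is in the inessential-$\alpha$ case: invariance of $[\alpha]$ does not immediately yield that the subsystem $\{c_i:i\in I\}$ is homotopic into the disk or collar bounded by $\alpha$; one needs to argue from the structure of the braid group of the disk (resp.\ annulus) that the restriction of $\beta$ to those strands, being the obstruction, forces the corresponding $c_i$'s to be null-homotopic or boundary-parallel. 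So the proposal is a correct and well-organized outline of the standard proof, but it would require the two lemmas above to be filled in before it becomes a proof; since the paper relies on \cite{Ichihara--Motegi05} for exactly this, you are in good company in not reproving them from scratch.
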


We will use Theorem \ref{thm:IM} to construct a pseudo-Anosov mapping class $[\varphi]_{N-p} \in {\rm MCG}(N-p)$ so that $\varphi \in \mathrm{Diff}_0(N)$ (Lemma \ref{lem:pA}). 

\subsection{Geometric group theory}
We briefly review the basics of geometric group theory relevant to this paper (for more details, see \cite{Drutu-Kapovich,Loh} for example).

\begin{definition}
Let $(X,d_X)$ and $(Y,d_Y)$ be metric spaces. 
\begin{enumerate}
    \item  For $K\geq 1$ and $L\geq 0$, a map $f \colon X \to Y$ is a \emph{$(K,L)$-quasi-isometric embedding} if
    \[ K^{-1} d_X(x_1,x_2) -L \leq d(f(x_1),f(x_2)) \leq K d_X(x_1,x_2) +L \]
    for every $x_1,x_2 \in X$.
    \item  For $C\geq0$, a map $f \colon X \to Y$ is \emph{$C$-dense} if for every $y \in Y$, there exists $x \in X$ such that
    \[ d_Y(d(x),y) \leq C. \]
    \item A map $f \colon X \to Y$ is a \emph{quasi-isometry} if $f$ is a $(K,L)$-quasi-isometric embedding and \emph{$C$-dense} for some $K$, $L$ and $C$. We shall call these constants $K$, $L$ and $C$ the \emph{quasi-isometric constants} of $f$.
    \item The metric spaces $X$ and $Y$ are \emph{quasi-isometric} if there exists a quasi-isometry from $X$ to $Y$, and we write $X \qisim Y$.
\end{enumerate}

\end{definition}

Note that a $(1,0)$-quasi-isometric embedding is nothing but an isometric embedding.
We remark that $\qisim$ is an equivalence relation.
In particular, 
if $f\colon X \to Y$ and $g\colon Y \to Z$ are quasi-isometries, then $g \circ f$ is also a quasi-isometry, and the quasi-isometric constant of $g \circ f$ depends only on those of $f$ and $g$.

The Gromov hyperbolicity of metric spaces is defined as follows:
\begin{definition}
Let $(X,d)$ be a metric space. For points $x$, $y$, $w$ of $(X,d)$, the \emph{Gromov product} is defined to be
\begin{equation*}
\langle x,y\rangle_{w}\coloneqq\frac{1}{2}(d(w,x)+d(w,y)-d(x,y)).
\end{equation*}
For $\delta\geq 0$, a metric space $X$ is \emph{$\delta$-hyperbolic} if for all $w,x,y,z\in X$ we have
\begin{equation*}
\langle x,z\rangle_{w}\geq\mathrm{min}\{\langle x,y\rangle_{w},\langle y,z\rangle_{w}\}-\delta.
\end{equation*}
A metric space is called \emph{Gromov hyperbolic} if it is $\delta$-hyperbolic for some $\delta$.
We call the constant $\delta$ the \emph{hyperbolicity constant} of the space.

\end{definition}
It is well-known that the Gromov hyperbolicity is preserved under quasi-isometries.
Moreover, if $f \colon X \to Y$ is a quasi-isometry between Gromov hyperbolic spaces, then the hyperbolicity constant of $X$ depends only on that of $Y$ and the quasi-isometry constants of $f$.

\subsection{Curve graph}

A graph $\Gamma=(V,E)$ consists of a set of \emph{vertices} $V$ and a set of \emph{edges} $E$, where each edge is an unordered pair of vertices. 
A graph $\Gamma$ is \emph{connected} 
if any two vertices can be joined by a path of edges (i.e., for every $v,w \in V$, there exist edges $\{v_0,v_1\}, \{v_1,v_2\}, \dots, \{ v_{n-1}, v_n \}$ such that $v_0=v$ and $v_n=w$). We can regard a connected graph as a metric space, and furthermore as a geodesic space, by assigning length 1 to each edge.

A simple closed curve $c$ on $F$ is {\it essential} if either of the following holds:
\begin{itemize}
    \item $c$ is nonseparating.
    \item $c$ is separating, does not bound a disk, a disk with one marked point, or a M\"obius band, and is not isotopic to a boundary component of $F$.
\end{itemize}
The {\it curve graph} $\mathcal{C}(F)$ of $F$ 
is the graph whose vertices are isotopy classes of essential simple closed curves on $N$. Two vertices are connected by an edge if the corresponding curves can be realized disjointly.
The  {\it nonseparating curve graph} $\mathcal{NC}(F)$ of $F$ is the full subgraph of 
$\mathcal{C}(F)$ consisting of vertices represented by nonseparating curves.

The {\it geometric intersection number} $i(c_{1},c_{2})$ between simple closed curves $c_{1}$ and $c_{2}$ on $F$ is defined to be the number of intersection points between $c_{1}$ and $c_{2}$. For a curve $c$ on $F$, we denote by $[c]_{F}$ the homotopy class of $c$ in $F$. Then we also denote by $i([c_{1}]_{F},[c_{2}]_{F})$ the {\it geometric intersection number} between the isotopy classes $c_{1}$ and $c_{2}$ of simple closed curves on $F$, that is, the minimal number of intersection points between a representative curve in the class of $c_{1}$ and a representative curve in the class of $c_{2}$. Two simple closed curves $c_{1}$ and $c_{2}$ in $F$ are in {\it minimal position} if the number of intersections of $c_{1}$ and $c_{2}$ is minimal in the isotopy classes of $c_{1}$ and $c_{2}$. Note that two essential simple closed curves are in minimal position in $F$ if and only if they do not bound a bigon on $F$ (see~\cite[Proposition 2.1]{Stukow17} for nonorientable surfaces). 

Slightly abusing the notation, we consider vertices of curve graphs as the essential simple closed curves on $F$ which are in minimal position.
Furthermore, so long as it does not cause confusion, 
we may refer to an essential simple closed curve on $F$ as a curve.
We define the distances $d_{\mathcal{C}}(\cdot, \cdot)$ on $\mathcal{C}(N)$ by the minimal lengths of edge-paths connecting two vertices. Thus, we consider $\mathcal{C}(N)$, as a geodesic space.

Bowden, Hensel, and Webb~\cite{Bowden--Hensel--Webb22} define a new curve graph $\mathcal{C}^{\dagger}(S)$ called the {\it fine curve graph} for a closed orientable surface $S$ and prove that it is Gromov hyperbolic with a hyperbolicity constant that does not depend on the topological type of $S$ by using the uniform hyperbolicity of $\mathcal{NC}(S)$.

We define the {\it fine curve graph} of a nonorientable surface $N$ in accordance with Bowden, Hensel, and Webb~\cite{Bowden--Hensel--Webb22}.

\begin{definition}
Let $N=N_{g,n}$ be a nonorientable surface of genus $g\geq 3$ with $n\geq 0$ punctures. A {\it fine curve graph} $\mathcal{C}^{\dagger}(N)$ of $N$ is a graph whose vertices are the essential simple closed curves on $N$, and two vertices form an edge if the corresponding curves are disjoint.
\end{definition}

We remark that for $g=1,2$, we need to modify the definition of the fine curve graph (See Sention \ref{Low_genus_cases}). 

In this paper, we will prove that the fine curve graphs $\mathcal{C}^{\dagger}(N)$ of closed nonorientable surfaces are uniformly hyperbolic by using Bowden, Hensel, and Webb's~\cite{Bowden--Hensel--Webb22} technique:

\begin{theorem}\label{unif_hyp_fine_curve_graph}
For every closed nonorientable surface $N=N_{g}$ of genus $g\geq 3$, $\mathcal{C}^{\dagger}(N)$ is Gromov-hyperbolic. Moreover, its hyperbolicity constant can be taken to be independent of $g$.
\end{theorem}

Such a phenomenon, where the hyperbolicity constant can be taken uniformly, is known as \emph{uniform hyperbolicity}.
The proof of Theorem~\ref{unif_hyp_fine_curve_graph} will be given in Section~\ref{Uniform_hyperbolicity_for_the_fine_curve_graphs_of_nonorientable surfaces}.
The case of $g=2$ will be discussed in Section \ref{Low_genus_cases}.

\subsection{Quasimorphism} \label{subsec:qm}

Let $G$ be a group. A function $\phi \colon G \to \mathbb{R}$ is called a \emph{quasimorphism} if its \emph{defect}
\[ D(\phi) = \sup_{g,h \in G} |\phi(gh)-\phi(g)-\phi(h)| \]
is finite.
A quasimorphism $\phi$ is \emph{homogeneous} if $\phi(g^n)=n\phi(g)$ for every $g \in G$ and $n \in \mathbb{Z}$.
Let $QH(G)$ denote the $\mathbb{R}$-vector space of homogeneous quasimorphisms on $G$.
Naturally, the space $H^1(G)=H^1(G;\mathbb{R})$ of homomorphisms from $G$ to $\mathbb{R}$ is a linear subspace of $QH(G)$.
The quotient space $QH(G)/H^1(G)$ is called \emph{the space of nontrivial quasimorphisms} and is denoted by $\widetilde{QH}(G)$.
It is well-known that $\widetilde{QH}(G)$ is isomorphic to the kernel of the comparison map $H_b^2(G) \to H^2(G)$. Therefore, the study of second bounded cohomology can be attributed to the study of quasimorphisms. For more information about quasimorphisms and bounded cohomology, see \cite{Calegari09} for example.

Bestvina and Fujiwara~\cite{Bestvina--Fujiwara07} developed a method for constructing quasimorphisms using group actions on hyperbolic spaces. 
Assume that a group $G$ acts on a graph $\Gamma$ isometrically.
A \textit{translation length} $|g|_{\Gamma}$ of $g \in G$ is defined by
\[ |g|_{\Gamma} = \lim_{n\to \infty} \frac{d(x,g^n x)}{n} \]
for some (any) $x \in \Gamma$.
We say that $g$ is \textit{hyperbolic} (with respect to the action on $\Gamma$) if $|g|_{\Gamma}>0$.
For a subset $A$ of $\mathbb{R}$, a $(K,L)$-quasi-isometric embedding $\gamma \colon A \to \Gamma$ (and its image $\gamma(A)$) is called a \textit{$(K,L)$-quasi-geodesic}.
A \textit{$(K,L)$-quasi-axis} means a $(K,L)$-quasi-geodesic which is invariant under the action of $g \in G$.


The following proposition provides a sufficient condition for the existence of infinitely many nontrivial quasimorphisms:

\begin{proposition}[\cite{Bestvina--Fujiwara07},{\cite[Corollary 2.7.]{Bowden--Hensel--Webb22}}] \label{prop:BF}
  Suppose that a group $G$ acts on a $\delta$-hyperbolic space $X$.
  Let $g_1,g_2 \in G$ be two hyperbolic elements with $(K,L)$-quasi-axis $A_1,A_2$.
  Suppose that for any $x\in A_1$ and any $h \in G$ we have
  \[ d(hx,A_2) > B(K,L,\delta). \]
  Then, the space of nontrivial quasimorphisms $\widetilde{QH}(G)$ is infinite-dimensional.
\end{proposition}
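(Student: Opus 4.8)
The plan is to recognize this as an instance of the Bestvina--Fujiwara method for producing quasimorphisms from an isometric action on a Gromov hyperbolic space, and to verify its hypotheses from the uniform transversality assumption $d(hx,A_2) > B(K,L,\delta)$. The argument breaks into three stages: first, promote the pair $g_1, g_2$ to an infinite family of hyperbolic elements of $G$ whose quasi-axes are pairwise ``independent'' under the $G$-action; second, run the Bestvina--Fujiwara counting construction on each member of the family to obtain a homogeneous quasimorphism that is genuinely nontrivial in $\widetilde{QH}(G)$; third, check that the resulting classes are linearly independent.

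For the first stage I would consider the elements $h_n = g_1^n g_2$ for $n$ large (after replacing $g_1, g_2$ by large powers if needed, which leaves $A_1, A_2$ and the hypothesis unchanged while ensuring the relevant quasi-geodesic segments are long compared to $\delta$). The key geometric input is a local-to-global statement: since every $G$-translate of $A_1$ stays at distance $> B(K,L,\delta)$ from $A_2$, stability of quasi-geodesics in a $\delta$-hyperbolic space forces the bi-infinite concatenation of successive translates of $A_1$ and $A_2$ along the orbit of $\langle h_n \rangle$ to be a quasi-geodesic, with constants depending only on $K, L, \delta$. Hence each $h_n$ is hyperbolic with a quasi-axis $A_n'$ assembled from arcs of translates of $A_1$ and $A_2$, whose ``$A_1$-arcs'' have length about $n$ times the translation length of $g_1$. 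From this combinatorial shape, and again from the divergence bound, one reads off that for $n \ne m$ no $G$-translate of $A_n'$ fellow-travels $A_m'$ beyond a bounded length, and likewise that $A_n'$ does not long-fellow-travel its orientation-reverse under $G$; these are precisely the independence and non-inversion conditions demanded by the Bestvina--Fujiwara construction.

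The second stage is then the standard construction: for each $n$ one counts, along a geodesic from a basepoint $x_0$ to $w x_0$, the oriented copies of a fundamental arc of $A_n'$, takes the positive minus negative count, and homogenizes; thinness of triangles yields the quasimorphism inequality, and the independence/non-inversion from stage one gives unboundedness, hence a nonzero class $[\phi_n] \in \widetilde{QH}(G)$. For the third stage, evaluating $\phi_n$ on large powers of $h_m$ yields a collection of values that grows linearly exactly when $n = m$, so any nontrivial finite linear combination of the $[\phi_n]$ is unbounded and therefore nonzero; thus $\widetilde{QH}(G)$ is infinite dimensional. The main obstacle is the first stage: producing the composite quasi-axes $A_n'$ with constants that do not degrade as $n \to \infty$ and deducing the exact pairwise-independence and non-inversion conditions from the single uniform bound $B(K,L,\delta)$. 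All of this is precisely what is packaged in \cite[Corollary~2.7]{Bowden--Hensel--Webb21} (which in turn rests on \cite{Bestvina--Fujiwara07}), so once the hypothesis has been arranged one may simply invoke that statement.
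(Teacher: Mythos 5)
The paper does not give a proof of this proposition: it is stated purely as a quoted result, attributed to Bestvina--Fujiwara and to \cite[Corollary~2.7]{Bowden--Hensel--Webb21}, and the authors simply invoke it in the proof of Theorem~\ref{thm:main}. Your proposal, by contrast, attempts to sketch the underlying Bestvina--Fujiwara machinery: forming the elements $h_n = g_1^n g_2$, using the uniform separation bound $d(hx,A_2) > B(K,L,\delta)$ together with stability of quasi-geodesics to build quasi-axes $A_n'$ with uniform constants, verifying the independence and non-inversion conditions, and then running the counting quasimorphism construction and checking linear independence on powers of the $h_m$.

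As an informal reconstruction of what lies behind the cited corollary this is reasonable, and you correctly identify the hard technical point (keeping the quasi-axis constants for $h_n$ uniform as $n\to\infty$ and extracting the precise independence conditions from the single bound $B(K,L,\delta)$). But you are effectively re-deriving the content of \cite[Corollary~2.7]{Bowden--Hensel--Webb21} rather than matching the paper's treatment, which is simply to cite it. You acknowledge this yourself in your final sentence. So the verdict is: your sketch is compatible with the sources the paper relies on, but it goes well beyond what the paper itself does for this statement; the paper's ``proof'' is a citation, and at the level of detail you supply, the argument would still need the quantitative lemmas of Bestvina--Fujiwara (on fellow-travelling, copies of segments along geodesics, and the defect estimate) to be made rigorous rather than hand-waved via ``thinness of triangles.''
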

Here, $B(K,L,\delta)$ is a constant which comes from the Morse lemma for quasi-geodesics and depends only on $K$, $L$, and $\delta$ (see \cite{Bestvina--Fujiwara07,Bowden--Hensel--Webb22} for the definition).

\section{Uniform hyperbolicity for the fine curve graphs of nonorientable surfaces}\label{Uniform_hyperbolicity_for_the_fine_curve_graphs_of_nonorientable surfaces}

In this section, we prove Theorem~\ref{unif_hyp_fine_curve_graph}, that is, the fine curve graphs $\mathcal{C}^{\dagger}(N)$ of closed nonorientable surfaces are uniformly hyperbolic. 

\subsection{Notation}

Let $N=N_{g,n}$ be a nonorientable surface of genus $g\geq 3$ with $n\geq 0$ punctures.

\begin{definition}
The {\it surviving curve graph} $\mathcal{C}^{s}(N)$ is a full subgraph of the original curve graph $\mathcal{C}(N)$ whose vertices correspond to the isotopy classes of curves on $N$ which are essential even after filling in the punctures.
\end{definition}

Note that every nonseparating curve is surviving, meaning that $\mathcal{NC}(N)$ is a full subgraph of $\mathcal{C}^{s}(N)$.


A curve $c$ on a nonorientable surface $N$ is called {\it one-sided} if its regular neighborhood is a M\"obius band, and {\it two-sided} if the regular neighborhood is an annulus.

We define two variants of the graph $\mathcal{C}(N)$ as follows:
\begin{itemize}
    \item The {\it two-sided curve graph} $\mathcal{C}_{\mathrm{two}}(N)$ of $N$ is the full subgraph of $\mathcal{C}(N)$ consisting of vertices corresponding to two-sided essential curves.
    \item The {\it extended curve graph} $\mathcal{C}^{\pm}(N)$ of $N$ is 
    the curve graph whose vertices are 
    the isotopy classes not only of essential simple closed curves but also of simple closed curves bounding a M\"obius band.
    Two vertices are connected by an edge if the corresponding curves can be realized disjointly.
\end{itemize}
The above notation (two-sided and extended) will similarly apply to
$\mathcal{NC}(N)$, $\mathcal{C}^{s}(N)$, and $\mathcal{C}^{\dagger}(N)$. 
Moreover, we may use multiple notations simultaneously; 
for instance, $\mathcal{C}^{\pm\dagger}_{\mathrm{two}}(N)$ refers to the extended two-sided fine curve graph of $N$ whose vertices consist of both the two-sided curves and curves bounding a M\"obius band.

We define the distances $d_{\mathcal{C}^{\pm}}(\cdot,\cdot)$ on $\mathcal{C}^{\pm}(N)$, for example, in terms of the minimal lengths of edge paths connecting the two vertices (the same symbols are used for the other graphs). Thus, we consider them to be geodesic spaces.

\subsection{Quasi-isometries between curve graphs}

\begin{proposition}
Let $N=N_{g,n}$ be a nonorientable surface of genus $g\geq 3$ with $n\geq 0$ punctures. Then, $\mathcal{C}^{s}(N)$, $\mathcal{C}^{\pm s}(N)$, and $\mathcal{C}^{\pm s}_{\mathrm{two}}(N)$ are path-connected.
\end{proposition}

\begin{proof}
First, we show the path-connectedness of $\mathcal{C}^{s}(N)$ (the same argument goes for $\mathcal{C}^{\pm s}(N)$). Take any pair of vertices $a, b\in\mathcal{C}^{s}(N)$. If both $a$ and $b$ are nonseparating, then by the path-connectedness of $\mathcal{NC}(N)$, 
they can be joined by a path
in $\mathcal{NC}(N)\subset\mathcal{C}^{s}(N)$. We may assume that at least one of $a$ and $b$ is separating. 
Since $a$ and $b$ are surviving, the separating one bounds a subsurface with a positive genus 
(if a subsurface is nonorientable, the genus is at least two if we consider $\mathcal{C}^{s}(N)$, and at least one if we consider $\mathcal{C}^{\pm s}(N)$). We can take nonseparating curves $a'$ and/or $b'$ that are disjoint from $a$ and $b$, respectively. Since $a'$ and $b'$ are nonseparating, they are connected by a path in $\mathcal{NC}(N)\subset\mathcal{C}^{s}(N)$. Hence, there is a path connecting $a$ and $b$ in $\mathcal{C}^{s}(N)$.

Second, we show that $\mathcal{C}^{\pm s}_{\mathrm{two}}(N)$ is path-connected. Take any pair of vertices $a, b\in\mathcal{C}^{\pm s}_{\mathrm{two}}(N)$. There exists a path $\mathcal{P}^{\pm s}(a,b)$ connecting $a$ and $b$ in $\mathcal{C}^{\pm s}(N)$. Let $d$ be any one-sided curve in $\mathcal{P}^{\pm s}(a,b)$ and $c'$ and $c''$ be the vertices adjacent to $d$ in $\mathcal{P}^{\pm s}(a,b)$. The boundary $c$ of the regular neighborhood of $d$ is a two-sided curve bounding a M\"obius band. Note that $i(d,c')=i(d,c'')=0$, which implies $i(c,c')=i(c,c'')=0$. Hence, we can replace $d$ by $c$ in $\mathcal{P}^{\pm s}(a,b)$. By repeating this process for all one-sided curves in $\mathcal{P}^{\pm s}(a,b)$, we obtain a path connecting $a$ and $b$ in $\mathcal{C}^{\pm s}_{\mathrm{two}}(N)$.
\end{proof}

\begin{proposition}\label{nonsep_curve_graph_and_surviving_curve_graph_are_quasi-isometric}
Let $N=N_{g,n}$ be a nonorientable surface of genus $g\geq 3$ with $n\geq 0$ punctures. Then, $\mathcal{C}^{s}(N)$, $\mathcal{C}^{\pm s}(N)$, and $\mathcal{C}^{\pm s}_{\mathrm{two}}(N)$ are quasi-isometric:
\begin{equation*}
 \mathcal{NC}(N) \qisim \mathcal{C}^{s}(N) \qisim \mathcal{C}^{\pm s}(N) \qisim \mathcal{C}^{\pm s}_{\mathrm{two}}(N).
\end{equation*}
Moreover, we can take quasi-isometries between them so that their quasi-isometric constants do not depend on $g$ and $n$.
\end{proposition}

Proposition \ref{nonsep_curve_graph_and_surviving_curve_graph_are_quasi-isometric} follows from the following Lemmas \ref{f1}, \ref{f2} and \ref{f3}.

\begin{lemma} \label{f1}
For $N=N_{g,n}$ with $g\geq 3$ and $n\geq 0$, the natural inclusion map $f_{1}\colon\mathcal{NC}(N)\hookrightarrow\mathcal{C}^{s}(N)$ is an isometric embedding 
and is 1-dense.
\end{lemma}

\begin{proof} First, we prove that $f_{1}$ is an isometric embedding. Since $\mathcal{NC}(N)\subset\mathcal{C}^{s}(N)$, for any $a,b\in\mathcal{NC}(N)$ we have $d_{\mathcal{C}^{s}}(a,b)\leq d_{\mathcal{NC}}(a,b)$. To show $d_{\mathcal{C}^{s}}(a,b)\geq d_{\mathcal{NC}}(a,b)$ for any $a,b\in\mathcal{NC}(N)$, let $\mathcal{G}^{s}(a,b)$ be a geodesic connecting $a$ and $b$ in $\mathcal{C}^{s}(N)$. Take any separating curve $d$ in $\mathcal{G}^{s}(a,b)$. Since $d$ is surviving, each connected component of $N\setminus d$ has a positive genus. 
Let $F$ and $F'$ denote these components.
Note that if the connected component is a nonorientable surface, 
its genus is at least 2.
Let $c'$ and $c''$ be the vertices adjacent to $d$ in $\mathcal{G}^{s}(a,b)$. Then $c'$ and $c''$ intersect since $\mathcal{G}^{s}(a,b)$ is a geodesic. 
Hence, they lie in the same component $F$.
We can take a nonseparating curve $c$ in $F'$, and the path where $d$ is replaced by $c$ in $\mathcal{G}^{s}(a,b)$ is also a geodesic. By repeating this process for all separating curves in $\mathcal{G}^{s}(a,b)$, we obtain a geodesic consisting of nonseparating curves. Hence, $d_{\mathcal{NC}}(a,b)\leq d_{\mathcal{C}^{s}}(a,b)$. 

Next, we prove that $f_{1}$ is 1-dense. If $a\in\mathcal{C}^{s}(N)$ is a nonseparating curve,  we take $a\in\mathcal{NC}(N)$, and we have $d_{\mathcal{C}^{s}}(a,f_{1}(a))=d_{\mathcal{C}^{s}}(a,a)=0$. If $a\in\mathcal{C}^{s}(N)$ is a separating curve, then each component of $N\setminus a$ has a positive genus since $a$ is surviving. We take a nonseparating curve $b$ in the component; then, we have $d_{\mathcal{C}^{s}}(a,f_{1}(b))=d_{\mathcal{C}^{s}}(a,b)=1$.
\end{proof}

\begin{lemma} \label{f2}
For $N=N_{g,n}$ with $g\geq 3$ and $n\geq 0$, the natural inclusion map $f_{2}\colon\mathcal{C}^{s}(N)\hookrightarrow\mathcal{C}^{\pm s}(N)$ is an isometric embedding 
and is 1-dense.
\end{lemma}

\begin{proof} 
 First, we prove that $f_{2}$ is an isometric embedding. Since $\mathcal{C}^{s}(N)\subset\mathcal{C}^{\pm s}(N)$, for any $a,b\in\mathcal{C}^{s}(N)$ we have $d_{\mathcal{C}^{\pm s}}(a,b)\leq d_{\mathcal{C}^{s}}(a,b)$. To show $d_{\mathcal{C}^{\pm s}}(a,b)\geq d_{\mathcal{C}^{s}}(a,b)$ for any $a,b\in\mathcal{C}^{s}(N)$, let $\mathcal{G}^{\pm s}(a,b)$ be a geodesic connecting $a$ and $b$ in $\mathcal{C}^{\pm s}(N)$. Take any curve $d$ bounding a M\"obius band in $\mathcal{G}^{\pm s}(a,b)$. We can replace $d$ by the core curve $c$ of the M\"obius band in $\mathcal{G}^{\pm s}(a,b)$. The resulting path is a geodesic that does not have curves bounding a M\"obius band. Therefore, $d_{\mathcal{C}^{s}}(a,b)\leq d_{\mathcal{C}^{\pm s}}(a,b)$. 
 
 Next, we prove that $f_{2}$ is 1-dense. If $a\in\mathcal{C}^{\pm s}(N)$ is a curve bounding a M\"obius band, we take the core curve $b\in\mathcal{C}^{s}(N)$ of the M\"obius band, and we have $d_{\mathcal{C}^{\pm s}}(a,f_{2}(b))=d_{\mathcal{C}^{\pm s}}(a,b)=1$. Otherwise, we have $d_{\mathcal{C}^{\pm s}}(a,f_{2}(a))=d_{\mathcal{C}^{\pm s}}(a,a)=0$.
\end{proof}

\begin{lemma} \label{f3}
For $N=N_{g,n}$ with $g\geq 3$ and $n\geq 0$, the natural inclusion map $f_{3}\colon\mathcal{C}^{\pm s}_{\mathrm{two}}(N)\hookrightarrow\mathcal{C}^{\pm s}(N)$ is an isometric embedding 
and is 1-dense.
\end{lemma}

\begin{proof} First, we prove that $f_{3}$ is an isometric embedding. Since $\mathcal{C}^{\pm s}_{\mathrm{two}}(N)\subset\mathcal{C}^{\pm s}(N)$, for any $a,b\in\mathcal{C}^{\pm s}_{\mathrm{two}}(N)$ we have $d_{\mathcal{C}^{\pm s}}(a,b)\leq d_{\mathcal{C}^{\pm s}_{\mathrm{two}}}(a,b)$. To show $d_{\mathcal{C}^{\pm s}}(a,b)\geq d_{\mathcal{C}^{\pm s}_{\mathrm{two}}}(a,b)$ for any $a,b\in\mathcal{C}^{\pm s}_{\mathrm{two}}(N)$, let $\mathcal{G}^{\pm s}(a,b)$ be a geodesic connecting $a$ and $b$ in $\mathcal{C}^{\pm s}(N)$. Take any one-sided curve $d$ in $\mathcal{G}^{\pm s}(a,b)$. We can replace $d$ by the boundary $c$ of the regular neighborhood of $d$ in $\mathcal{G}^{\pm s}(a,b)$. The resulting path is a geodesic consisting of two-sided curves. Therefore, $d_{\mathcal{C}^{\pm s}_{\mathrm{two}}}(a,b)\leq d_{\mathcal{C}^{\pm s}}(a,b)$. 

Next, we prove that $f_{3}$ is 1-dense. If $a\in\mathcal{C}^{\pm s}(N)$ is two-sided, we take $a\in\mathcal{C}^{\pm s}_{\mathrm{two}}(N)$, and $d_{\mathcal{C}^{\pm s}}(a,f_{3}(a))=d_{\mathcal{C}^{\pm s}}(a,a)=0$. If $a\in\mathcal{C}^{\pm s}(N)$ is one-sided, we take the boundary $b$ of the regular neighborhood of $a$. Then $b\in\mathcal{C}^{\pm s}_{\mathrm{two}}(N)$, and $d_{\mathcal{C}^{\pm s}}(a,f_{3}(b))=d_{\mathcal{C}^{\pm s}}(a,b)=1$.
\end{proof}

It is known that the nonseparating curve graphs $\mathcal{NC}(N)$ of nonorientable surfaces $N$ are uniformly hyperbolic (see~\cite{Kuno21}):

\begin{theorem}{\rm(}\cite{Kuno21}{\rm)}\label{unif_hyp_of_nonseparating_curve_graph}
For every nonorientable surface $N=N_{g,n}$ of genus $g\geq 3$ with $n\geq 0$ punctures, the nonseparating curve graph $\mathcal{NC}(N)$ is connected and Gromov-hyperbolic. Moreover, its hyperbolicity constant can be taken to be independent of $g$ and $n$.
\end{theorem}

By combining Proposition~\ref{nonsep_curve_graph_and_surviving_curve_graph_are_quasi-isometric} and Theorem~\ref{unif_hyp_of_nonseparating_curve_graph}, we obtain the following corollary:

\begin{corollary}\label{extended_two-sided_cur_graph_is_uniformly_hyperbolic}
For every nonorientable surface $N=N_{g,n}$ of genus $g\geq 3$ with $n\geq0$ punctures, the extended two-sided surviving curve graph $\mathcal{C}^{\pm s}_{\mathrm{two}}(N)$ is  Gromov-hyperbolic. 
Moreover, its hyperbolicity constant can be taken to be independent of $g$ and $n$.
\end{corollary}

\subsection{Quasi-isometries between fine curve graphs}

\begin{proposition}\label{fine_curve_graphs_are_path_connected}
Let $N=N_{g,n}$ be a nonorientable surface of genus $g\geq 3$ with $n\geq 0$ punctures. Then, $\mathcal{C}^{\dagger}(N)$, $\mathcal{C}^{\pm \dagger}(N)$, and $\mathcal{C}^{\pm \dagger}_{\mathrm{two}}(N)$ are path-connected.
\end{proposition}

We will require the following lemma to prove Proposition~\ref{fine_curve_graphs_are_path_connected}:

\begin{lemma}\label{extended_two-sided_curve_graph_is_path-conn}
Let $N=N_{g,n}$ be a nonorientable surface of genus $g\geq 3$ with $n\geq 0$ punctures. Then, the extended two-sided curve graph $\mathcal{C}^{\pm}_{\mathrm{two}}(N)$ is path-connected.
\end{lemma}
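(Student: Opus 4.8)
The plan is to show that any vertex of $\mathcal{C}^{\pm}_{\mathrm{two}}(N)$ can be joined by an edge-path to some fixed nonseparating two-sided curve $c_0$, and then invoke connectedness of $\mathcal{NC}(N)$ (Theorem~\ref{unif_hyp_of_nonseparating_curve_graph}), which sits inside $\mathcal{C}^{\pm}_{\mathrm{two}}(N)$ as a full subgraph. Since $g \geq 3$, such a $c_0$ certainly exists, and $\mathcal{NC}(N)$ is connected, so it suffices to connect an arbitrary vertex $a$ to $\mathcal{NC}(N)$. The vertices of $\mathcal{C}^{\pm}_{\mathrm{two}}(N)$ come in two flavors: two-sided essential simple closed curves (separating or nonseparating), and curves bounding a M\"obius band. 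If $a$ is a curve bounding a M\"obius band, let $\mu$ be that M\"obius band; then $N \setminus \mu$ is a nonorientable surface of genus $g-1 \geq 2$ (or an orientable surface of positive genus if $g=3$ and... no, $N \setminus \mu$ has the same orientability type minus one cross-cap, so genus $g-1\geq 2$ as a nonorientable surface, or possibly an orientable surface of genus $\geq 1$ when $g$ is even—in any case it contains a nonseparating curve of $N$), so we can find a nonseparating two-sided curve $b$ of $N$ disjoint from $a$, giving an edge from $a$ into $\mathcal{NC}(N)$.

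Now suppose $a$ is a two-sided essential separating curve. Then $N \setminus a$ has two components $F, F'$, and since $a$ is essential (does not bound a disk, a punctured disk, or a M\"obius band, and is not boundary-parallel), each component is a surface with at least one boundary circle which is not a disk or a M\"obius band. I would like to produce a nonseparating curve of $N$ disjoint from $a$; equivalently, I need one of $F, F'$ to contain a simple closed curve which is nonseparating \emph{in $N$}. This holds as soon as one component has positive genus (orientable genus $\geq 1$, i.e.\ contains a nonseparating curve of that component whose complement there is connected, hence nonseparating in $N$) or nonorientable genus $\geq 1$ with enough room. The delicate case is when both $F$ and $F'$ are planar, i.e.\ $a$ separates $N$ into pieces of genus zero; but a genus-zero surface with a single boundary component is a disk (excluded) and with one boundary plus punctures is a punctured disk (excluded), so essentiality forces at least one side to carry genus or a cross-cap. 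The case that needs care is when $a$ cuts off a M\"obius-band-with-holes type piece or when the only available curve on one side bounds a M\"obius band; here I would instead pass to the \emph{core} of that M\"obius band or to the boundary of a regular neighborhood of a one-sided curve, reducing to the previous paragraph.

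The main obstacle is bookkeeping the low-complexity cases: verifying that for \emph{every} essential two-sided curve (including those separating $N$ into a small nonorientable piece such as $N_1$ with a hole, i.e.\ a punctured M\"obius band) one can still find a two-sided nonseparating curve of $N$ disjoint from it. The hypothesis $g \geq 3$ is exactly what guarantees enough topological room on at least one side; I expect the cleanest route is a short case analysis on the topological type of the two components of $N \setminus a$, in each case exhibiting either a nonseparating curve of $N$ disjoint from $a$ or a curve bounding a M\"obius band disjoint from $a$ (then appealing to the M\"obius-band case already handled). Once every vertex is within distance one of $\mathcal{NC}(N)$ and the latter is connected, path-connectedness of $\mathcal{C}^{\pm}_{\mathrm{two}}(N)$ follows immediately.
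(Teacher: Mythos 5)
Your plan hinges on the claim that $\mathcal{NC}(N)$ sits inside $\mathcal{C}^{\pm}_{\mathrm{two}}(N)$ as a full subgraph, but this is false: $\mathcal{NC}(N)$ contains one-sided nonseparating curves (for instance the core of a cross-cap), and one-sided curves are exactly the vertices that $\mathcal{C}^{\pm}_{\mathrm{two}}(N)$ omits. Consequently a path in $\mathcal{NC}(N)$ between two two-sided nonseparating curves need not stay inside $\mathcal{C}^{\pm}_{\mathrm{two}}(N)$, so the reduction ``connect an arbitrary vertex to $\mathcal{NC}(N)$, then travel inside $\mathcal{NC}(N)$'' does not establish path-connectedness of $\mathcal{C}^{\pm}_{\mathrm{two}}(N)$. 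Avoiding one-sided curves is the whole content of the lemma, not a peripheral bookkeeping step.

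The missing idea is a replacement move: on any edge-path in $\mathcal{C}^{\pm}(N)$, a one-sided curve $d$ can be replaced by the boundary $e$ of a small regular neighborhood of $d$, which is a two-sided curve bounding a M\"obius band and hence a vertex of $\mathcal{C}^{\pm}_{\mathrm{two}}(N)$; anything disjoint from $d$ can be isotoped off $e$, so the neighbors of $d$ on the path remain neighbors of $e$. The paper's proof simply starts from path-connectedness of $\mathcal{C}^{\pm}(N)$ (inherited from that of $\mathcal{C}(N)$), takes a geodesic from $a$ to $b$, and applies this replacement at every one-sided vertex along it; no case analysis on separating versus nonseparating curves or on the topology of $N\setminus a$ is required. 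If you insist on routing through $\mathcal{NC}(N)$, you would still need to perform the same replacement on the $\mathcal{NC}(N)$-segment, at which point your argument becomes the paper's with extra, unnecessary casework on top.
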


\begin{proof}[Proof of Lemma~\ref{extended_two-sided_curve_graph_is_path-conn}]
Note that the extended curve graph $\mathcal{C}^{\pm}(N)$ is path-connected because the ordinary curve graph $\mathcal{C}(N)$ is path-connected. Let $a,b\in\mathcal{C}^{\pm}_{\mathrm{two}}(N)$ be any pair of vertices which cannot be realized disjointly. We connect $a$ and $b$ by a geodesic $\mathcal{G}^{\pm}(a,b)$ in $\mathcal{C}^{\pm}(N)$. We choose any isotopy class of a one-sided curve $d$ in $\mathcal{G}^{\pm}(a,b)$. Let $c'$ and $c''$ be the vertices adjacent to $d$ in $\mathcal{G}^{\pm}(a,b)$. The boundary $e$ of the regular neighborhood of $d$ is a two-sided curve bounding a M\"obius band, so $e\in\mathcal{C}^{\pm}_{\mathrm{two}}(N)$. Moreover, $i(e,c')=i(e,c'')=0$. Hence, the path on which we replace $d$ with $e$ is also a geodesic in $\mathcal{C}^{\pm}(N)$. By repeating this process for all one-sided curves in $\mathcal{G}^{\pm}(a,b)$, we obtain a path in $\mathcal{C}^{\pm}_{\mathrm{two}}(N)$ connecting $a$ and $b$. Therefore, the extended two-sided curve graph $\mathcal{C}^{\pm}_{\mathrm{two}}(N)$ is path-connected.
\end{proof}

\begin{remark}
Lemma~\ref{extended_two-sided_curve_graph_is_path-conn} holds whenever the ordinary curve graph $\mathcal{C}(N)$ is path-connected; that is, if $g=1,2$, then $g+n\geq 5$ and $g\geq 3$. This condition implies that the {\it two-sided complexity}
\[ \xi^{\mathrm{two}}(N)=
 \begin{cases}
 \frac{3}{2}(g-1)+n-2 & (\mathrm{if}~g~\mathrm{is~odd}),\\
 \frac{3}{2}g+n-3 & (\mathrm{if}~g~\mathrm{is~even}),\\
 \end{cases} \]
of $N$, that is, the cardinality of a maximal set of mutually disjoint two-sided simple closed curves in $N_{g,n}$, is at least one.
\end{remark}

\begin{proof}[Proof of Proposition~\ref{fine_curve_graphs_are_path_connected}]
First, we show the path-connectedness for $\mathcal{C}^{\pm \dagger}_{\mathrm{two}}(N)$. Let $a, b\in\mathcal{C}^{\pm \dagger}_{\mathrm{two}}(N)$ be any pair of vertices which intersect. If $a$ and $b$ are not isotopic, we can connect them by representatives which are mutually in minimal position of a path in $\mathcal{C}^{\pm}_{\mathrm{two}}(N)$, by Lemma~\ref{extended_two-sided_curve_graph_is_path-conn}. We may assume that $a$ and $b$ are isotopic. Since $a$ and $b$ are two-sided, $a$ can be isotoped to a curve $c$ so that $i(a,c)=0$ and $i(c,b)=0$. Hence, we can connect $a$ and $b$ by a path in $\mathcal{C}^{\pm \dagger}_{\mathrm{two}}(N)$.

Next, we show that $\mathcal{C}^{\pm \dagger}(N)$ is path-connected. Let $a, b\in\mathcal{C}^{\pm \dagger}(N)$ be any pair of vertices which intersect. It is enough to show that $a$ and $b$ are isotopic and one-sided. In this case, we take a boundary $a'$ of the regular neighborhood of $a$ so that $i(a,a')=0$. The curve $a$ can be isotoped to $b$ by some isotopy $\iota$. Then, $a'$ is also isotoped by $\iota$ to a curve which is a boundary of a regular neighborhood of $b$, and denote the curve by $b'$. Then, it follows that $a'$ and $b'$ are isotopic and two-sided. By the same argument presented above, we can connect $a$ and $b$ by a path in $\mathcal{C}^{\pm \dagger}(N)$.

Finally, we show that $\mathcal{C}^{\dagger}(N)$ is path-connected. Let $a, b\in\mathcal{C}^{\dagger}(N)$ be any pair of vertices which intersect. Let $\mathcal{G}^{\pm\dagger}(a,b)$ denote a geodesic connecting $a$ and $b$ in $\mathcal{C}^{\pm \dagger}(N)$. Let $d$ be any curve contained in $\mathcal{G}^{\pm\dagger}(a,b)$ which bounds a M\"obius band $M$ in the nonorientable surface $N$, and let $c'$ and $c''$ be the vertices adjacent to $d$ in $\mathcal{G}^{\pm\dagger}(a,b)$. Since $\mathcal{G}^{\pm\dagger}(a,b)$ is a geodesic, $c'$ and $c''$ must intersect. Hence, both $c'$ and $c''$ lie on either $M$ or $N\setminus M$. Now, the genus of $N$ is at least three, so we can take an essential curve $c$ from the component that does not contain $c'$ and $c''$. The resulting path on which we replace $d$ with $c$ in $\mathcal{G}^{\pm\dagger}(a,b)$ is also a geodesic in $\mathcal{C}^{\pm \dagger}(N)$. By repeating this process for all curves in $\mathcal{G}^{\pm\dagger}(a,b)$ bounding a M\"obius band, we obtain a geodesic connecting $a$ and $b$ in $\mathcal{C}^{\pm \dagger}(N)$ that consists of essential curves in $N$. This geodesic is a path in $\mathcal{C}^{\dagger}(N)$ connecting $a$ and $b$, so $\mathcal{C}^{\dagger}(N)$ is path-connected.
\end{proof}

\begin{proposition}\label{fine_curve_graphs_are_quasi-isometric}
Let $N=N_{g,n}$ be a nonorientable surface of genus $g\geq 3$ with $n\geq 0$ punctures. Then, $\mathcal{C}^{\dagger}(N)$, $\mathcal{C}^{\pm \dagger}(N)$, and $\mathcal{C}^{\pm \dagger}_{\mathrm{two}}(N)$ are quasi-isometric:
\begin{equation*}
 \mathcal{C}^{\dagger}(N) \qisim \mathcal{C}^{\pm \dagger}(N) \qisim \mathcal{C}^{\pm\dagger}_{\mathrm{two}}(N).
\end{equation*}    
Moreover, we can take quasi-isometries between them so that their quasi-isometric constants do not depend on $g$ and $n$.
\end{proposition}

Proposition \ref{nonsep_curve_graph_and_surviving_curve_graph_are_quasi-isometric} follows from the following Lemmas \ref{f4} and \ref{f5}.

\begin{lemma} \label{f4}
For $N=N_{g,n}$ with $g\geq 3$ and $n\geq 0$, the natural inclusion map  $f_{4}\colon\mathcal{C}^{\pm\dagger}_{\mathrm{two}}(N)\hookrightarrow\mathcal{C}^{\pm\dagger}(N)$ is  an isometric embedding and is 1-dense.
\end{lemma}

\begin{proof}
First, we prove that $f_{4}$ is an isometric embedding. Let $a,b\in\mathcal{C}^{\pm\dagger}_{\mathrm{two}}(N)$ be any pair of vertices.
Since $\mathcal{C}^{\pm\dagger}_{\mathrm{two}}(N)\subset\mathcal{C}^{\pm\dagger}(N)$, for any $a,b\in\mathcal{C}^{\pm\dagger}_{\mathrm{two}}(N)$, we have
$d_{\mathcal{C}^{\pm\dagger}}(a,b)\leq d_{\mathcal{C}^{\pm\dagger}_{\mathrm{two}}}(a,b)$.
To show $d_{\mathcal{C}^{\pm\dagger}}(a,b)\geq d_{\mathcal{C}^{\pm\dagger}_{\mathrm{two}}}(a,b)$ for any $a,b\in\mathcal{C}^{\pm\dagger}_{\mathrm{two}}(N)$, let $\mathcal{G}^{\pm\dagger}(a,b)$ be a geodesic connecting $a$ and $b$ in $\mathcal{C}^{\pm\dagger}(N)$.
Take any one-sided curve $d$ in $\mathcal{G}^{\pm\dagger}(a,b)$. We can replace $d$ with the boundary $c$ of the regular neighborhood of $d$ which does not intersect with $d$ in $\mathcal{G}^{\pm\dagger}(a,b)$. After repeating this process for all one-sided curves in $\mathcal{G}^{\pm\dagger}(a,b)$, we obtain a geodesic in $\mathcal{C}^{\pm\dagger}(N)$ that consists of two-sided curves.
Therefore, $d_{\mathcal{C}^{\pm\dagger}_{\mathrm{two}}}(a,b)\leq d_{\mathcal{C}^{\pm\dagger}}(a,b)$. 

Next, we prove that $f_{4}$ is 1-dense. It is enough to consider the one-sided curve $a\in\mathcal{C}^{\pm\dagger}(N)$. In this case, we take the boundary $b$ of the regular neighborhood of $a$ which does not intersect with $a$.
Then, $b\in\mathcal{C}^{\pm\dagger}_{\mathrm{two}}(N)$, and $d_{\mathcal{C}^{\pm\dagger}}(a,f_{4}(b))=d_{\mathcal{C}^{\pm\dagger}}(a,b)=1$.
\end{proof}

\begin{lemma} \label{f5}
For $N=N_{g,n}$ with $g\geq 3$ and $n\geq 0$, the natural inclusion map $f_{5}\colon\mathcal{C}^{\dagger}(N)\hookrightarrow\mathcal{C}^{\pm\dagger}(N)$ is  an isometric embedding and is 1-dense.
\end{lemma}

\begin{proof}
First, we prove that $f_{5}$ is an isometric embedding. Let $a,b\in\mathcal{C}^{\dagger}(N)$ be any pair of vertices.
Since $\mathcal{C}^{\dagger}(N)\subset\mathcal{C}^{\pm\dagger}(N)$, for any $a,b\in\mathcal{C}^{\dagger}(N)$, we have $d_{\mathcal{C}^{\pm\dagger}}(a,b)\leq d_{\mathcal{C}^{\dagger}}(a,b)$.
To show $d_{\mathcal{C}^{\pm\dagger}}(a,b)\geq d_{\mathcal{C}^{\dagger}}(a,b)$ for any $a,b\in\mathcal{C}^{\dagger}(N)$, let $\mathcal{G}^{\pm\dagger}(a,b)$ be a geodesic connecting $a$ and $b$ in $\mathcal{C}^{\pm\dagger}(N)$. Take any curve $d$ bounding a M\"obius band $M$ in $\mathcal{G}^{\pm\dagger}(a,b)$. Let $c'$ and $c''$ be the vertices adjacent to $d$ in $\mathcal{G}^{\pm\dagger}(a,b)$. Now $\mathcal{G}^{\pm\dagger}(a,b)$ is a geodesic, so $c'$ and $c''$ must intersect. Then, either both $c'$ and $c''$ lie in $M$ or both $c'$ and $c''$ lie in $N\setminus M$. We can take an essential curve $c$ from the component which does not contain $c'$ or $c''$ so that $i(c,c')=i(c,c'')=0$. Then, we replace $d$ by $c$ in $\mathcal{G}^{\pm\dagger}(a,b)$. After repeating this process for all curves bounding a M\"obius band in $\mathcal{G}^{\pm\dagger}(a,b)$, we obtain a geodesic in $\mathcal{C}^{\pm\dagger}(N)$ that consists of essential curves. Therefore, $d_{\mathcal{C}^{\dagger}}(a,b)\leq d_{\mathcal{C}^{\pm s}}(a,b)$. 

Next, we prove that $f_{5}$ is 1-dense. Here, it is enough to consider the curves $a$ bounding a M\"obius band in $\mathcal{C}^{\pm\dagger}(N)$. In this case, we take the core curve $b\in\mathcal{C}^{\dagger}(N)$ of the M\"obius band, and we have $d_{\mathcal{C}^{\pm\dagger}}(a,f_{5}(b))=d_{\mathcal{C}^{\pm\dagger}}(a,b)=1$.
\end{proof}

\subsection{Uniform hyperbolicity of fine curve graphs}

We are now ready to prove Theorem~\ref{unif_hyp_fine_curve_graph}. 
Due to Proposition~\ref{fine_curve_graphs_are_quasi-isometric}, it is enough to show the following proposition to prove Theorem~\ref{unif_hyp_fine_curve_graph}.

\begin{proposition}\label{fine_curve_graphs_are_unif_hyp}
For every closed nonorientable closed surface $N=N_{g}$ of genus $g\geq 3$, the extended two-sided fine curve graphs $\mathcal{C}^{\pm\dagger}_{\mathrm{two}}(N)$ is Gromov-hyperbolic. Moreover, its hyperbolicity constant can be taken to be independent of $g$.
\end{proposition}

We need three lemmas to prove Proposition~\ref{fine_curve_graphs_are_unif_hyp}. These lemmas come from Bowden, Hensel, and Webb~\cite[Lemmas 3.6, 3.7, 3.8]{Bowden--Hensel--Webb22} but with the assumption of orientable surfaces changed to nonorientable ones. Instead of our giving the proofs of Lemmas~\ref{distances_are_equivalent},~\ref{distance_of_fine_curve_graph}, and~\ref{distance_of_surviving_curve_graph}, readers may refer to the proofs of~\cite[Lemmas 3.6, 3.7, 3.8]{Bowden--Hensel--Webb22}.

Let $N=N_{g}$ be a closed nonorientable surface of genus $g\geq 3$ and let $P\subset N$ be a finite set. Recall that, for a curve $a$ in $N$, let $[a]_{N-P}$ denote the isotopy class of $a$ in $N-P$. 
Since we treat both a closed surface $N$ and its punctured surface $N-P$, we do not omit surfaces from the symbols for distances in curve graphs.
For instance, we write $d_{\mathcal{C}^{\pm s}_{\mathrm{two}}(N-P)}(\cdot,\cdot)$ for the metric of $\mathcal{C}^{\pm s}_{\mathrm{two}}(N-P)$.

\begin{lemma}{\rm(cf.} \cite[Lemma 3.6]{Bowden--Hensel--Webb22}{\rm)}\label{distances_are_equivalent}
Let $N=N_{g}$ be a closed nonorientable surface of genus $g\geq 3$. Suppose that vertices $a$ and $b$ in $\mathcal{C}^{\pm\dagger}_{\mathrm{two}}(N)$ are transverse, and are in minimal position in $N\setminus P$, where $P\subset N$ is finite and disjoint from $a$ and $b$. Then,
\begin{equation*}
d_{\mathcal{C}^{\pm s}_{\mathrm{two}}(N-P)}([a]_{N-P}, [b]_{N-P})=d_{\mathcal{C}^{\pm\dagger}_{\mathrm{two}}(N)}(a,b).
\end{equation*}
\end{lemma}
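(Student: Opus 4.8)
The plan is to follow the strategy of Bowden--Hensel--Webb's Lemma 3.4, replacing the orientable surface with $N$ and the (nonseparating/surviving) curve graphs with their nonorientable, extended, two-sided analogues, and checking that every geometric operation used there respects one-sidedness and the ``surviving'' condition. The two inequalities will be proved separately.

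First I would establish the inequality
$d_{\mathcal{C}^{\pm\dagger}_{\mathrm{two}}(N)}(a,b)\le d_{\mathcal{C}^{\pm s}_{\mathrm{two}}(N-P)}([a]_{N-P},[b]_{N-P})$.
Given a geodesic edge-path $[a]_{N-P}=[c_0]_{N-P},[c_1]_{N-P},\dots,[c_k]_{N-P}=[b]_{N-P}$ in $\mathcal{C}^{\pm s}_{\mathrm{two}}(N-P)$, consecutive classes have disjoint representatives in $N\setminus P$, hence \emph{a fortiori} disjoint representatives in $N$; so the same sequence of curves (now viewed as honest curves in $N$, not up to isotopy) is an edge-path in $\mathcal{C}^{\pm\dagger}_{\mathrm{two}}(N)$ of the same length, after small perturbations to make endpoints literally equal to $a$ and $b$. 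One must check that each $c_i$, being essential and two-sided (or bounding a M\"obius band) in $N\setminus P$, remains a legitimate vertex of $\mathcal{C}^{\pm\dagger}_{\mathrm{two}}(N)$: essentiality in $N\setminus P$ implies essentiality in $N$ for surviving curves by definition of $\mathcal{C}^{s}$, and sidedness is a property of the regular neighbourhood in $N$, unaffected by removing points disjoint from $c_i$. This direction is the easy one.

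The reverse inequality $d_{\mathcal{C}^{\pm s}_{\mathrm{two}}(N-P)}([a]_{N-P},[b]_{N-P})\le d_{\mathcal{C}^{\pm\dagger}_{\mathrm{two}}(N)}(a,b)$ is where the real work lies. Starting from a geodesic $a=\gamma_0,\gamma_1,\dots,\gamma_k=b$ in $\mathcal{C}^{\pm\dagger}_{\mathrm{two}}(N)$, I would push each $\gamma_i$ off the finite set $P$ by an ambient isotopy of $N$ supported in small disks around the points of $P$, obtaining curves $\gamma_i'$ disjoint from $P$; since $\gamma_i$ and $\gamma_{i+1}$ are disjoint, one arranges the pushes so that $\gamma_i'$ and $\gamma_{i+1}'$ remain disjoint in $N\setminus P$, and the endpoints $\gamma_0',\gamma_k'$ are isotopic in $N\setminus P$ to $a,b$ (since $a,b$ are already disjoint from $P$ and in minimal position there). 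Then $[\gamma_0']_{N-P},\dots,[\gamma_k']_{N-P}$ is an edge-path in $\mathcal{C}^{\pm s}_{\mathrm{two}}(N-P)$ — provided each $\gamma_i'$ is \emph{surviving}, i.e.\ still essential after filling in $P$; but filling in $P$ just recovers $N$, in which $\gamma_i'$ is isotopic to the essential curve $\gamma_i$, so the surviving condition is automatic. Combining the two inequalities gives the claim.

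The main obstacle I anticipate is the simultaneous ``pushing off $P$'' step: making all $k+1$ curves disjoint from $P$ by a \emph{single} controlled family of isotopies while \emph{preserving} the disjointness of each consecutive pair $(\gamma_i',\gamma_{i+1}')$ and not accidentally creating an inessential curve or changing a two-sided curve into something whose class in $N\setminus P$ is peripheral. The clean way around this is to do the pushes one point of $P$ at a time, supported in a disk $D_j$ so small that it meets none of the finitely many $\gamma_i$ whose isotopy class we do not wish to disturb, and to observe that a curve passing through $D_j$ can be slid out of $D_j$ within $D_j$ — an operation that is an isotopy of $N$, hence preserves essentiality, sidedness, and disjointness from any curve already outside $D_j$. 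This is exactly the local surgery argument in \cite[Lemma 3.4]{Bowden--Hensel--Webb21}, and since it is purely local and two-dimensional it is insensitive to orientability; accordingly, rather than reproduce it we refer the reader to that proof, as announced in the paragraph preceding the lemma.
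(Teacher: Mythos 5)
The paper does not actually give a proof of this lemma: it explicitly says that the reader should consult the proofs of \cite[Lemmas 3.4--3.6]{Bowden--Hensel--Webb21}, and this lemma is stated together with Lemmas~\ref{distance_of_fine_curve_graph} and \ref{distance_of_surviving_curve_graph}, which are precisely the tools BHW use in their proof. Your reconstruction has the right overall shape (two inequalities; realize a coarse geodesic by actually disjoint curves in $N$; push a fine geodesic off the finite set $P$), and you correctly identify that the core surgeries are local. But two specific steps are glossed over in a way that does not quite work.

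First, in the direction $d_{\mathcal{C}^{\pm\dagger}_{\mathrm{two}}(N)}(a,b)\le d_{\mathcal{C}^{\pm s}_{\mathrm{two}}(N-P)}([a],[b])$, you choose disjoint representatives of each consecutive pair $[c_i],[c_{i+1}]$ and then ``small perturbations'' to force the endpoints to equal $a$ and $b$. This does not produce a coherent chain: the representatives chosen for the edge $([c_{i-1}],[c_i])$ need not agree with those for $([c_i],[c_{i+1}])$, and a representative of $[c_1]$ disjoint from some representative of $[a]_{N-P}$ need not be disjoint from the given $a$ itself (the isotopy carrying one to the other need not be small or supported away from $c_1$). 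The correct mechanism is Lemma~\ref{distance_of_fine_curve_graph} (BHW's Lemma~3.5): fix $a$ and $b$, isotope the interior vertices $c_1,\dots,c_{k-1}$ so that $a,c_1,\dots,c_{k-1},b$ are simultaneously pairwise in minimal position in $N-P$; then consecutive curves are genuinely disjoint and the endpoints are literally $a,b$. This is also where the hypothesis that $a$ and $b$ are already in minimal position in $N\setminus P$ is used; your sketch never invokes it.

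Second, your verification that the curves in the reverse direction are vertices of $\mathcal{C}^{\pm s}_{\mathrm{two}}(N-P)$ says ``filling in $P$ just recovers $N$, in which $\gamma_i'$ is isotopic to the \emph{essential} curve $\gamma_i$, so the surviving condition is automatic.'' But a vertex of $\mathcal{C}^{\pm\dagger}_{\mathrm{two}}(N)$ can be a curve bounding a M\"obius band in $N$, which is \emph{not} essential in $N$; that is exactly what the ``$\pm$'' adds. For such a $\gamma_i$, if the isotopy in $N$ pushing it off $P$ leaves a point of $P$ inside the M\"obius band it bounds, the resulting curve bounds a \emph{punctured} M\"obius band in $N-P$: it is neither surviving nor a M\"obius-band boundary in $N-P$, so it fails to be a vertex of $\mathcal{C}^{\pm s}_{\mathrm{two}}(N-P)$. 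One has to arrange that M\"obius bands are pushed off $P$ along with their boundaries (which is what the precise formulation of Lemma~\ref{distance_of_surviving_curve_graph} together with the conventions on $\mathcal{C}^{\pm s}_{\mathrm{two}}$ is designed to guarantee). This is a genuinely nonorientable subtlety, so the closing claim that the argument is ``insensitive to orientability'' is too quick; the point of restating BHW's three lemmas in the extended two-sided setting is exactly to certify that this check goes through.
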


\begin{lemma}{\rm(cf.} \cite[Lemma 3.7]{Bowden--Hensel--Webb22}{\rm)}\label{distance_of_fine_curve_graph}
Let $N=N_{g}$ be a closed nonorientable surface of genus $g\geq 3$. Suppose that $a_{1},\cdots,a_{n}$ are two-sided curves including curves bounding a M\"obius band that are pairwise in minimal position in $N-P$. Let $b_{1},\cdots,b_{m}$ be two-sided curves including curves bounding a M\"obius band that are disjoint from $P$. Then, the $b_{i}$ can be isotoped in $N-P$ such that $a_{1},\cdots,a_{n},b_{1},\cdots,b_{m}$ are pairwise in minimal position in $N-P$.
\end{lemma}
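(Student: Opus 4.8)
The plan is to follow the bigon-removal argument of Bowden--Hensel--Webb \cite[Lemma 3.5]{Bowden--Hensel--Webb21}, paying attention at the few points where nonorientability and the puncture set $P$ actually matter. First I would put the whole family in general position: isotope each $b_i$ inside $N-P$ so that $a_1,\dots,a_n,b_1,\dots,b_m$ are pairwise transverse and have no triple points. This does not disturb the $a_i$, keeps every $b_i$ embedded and disjoint from $P$, so it reduces the problem to eliminating bigons starting from such a configuration. The measure of progress will be the total intersection number $I$, namely the sum of $\#(c\cap c')$ over all unordered pairs of distinct curves of the family; it is a finite nonnegative integer, and nothing we do will move the $a_i$.

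The core loop runs as follows. By the bigon criterion for simple closed curves on nonorientable surfaces (\cite[Proposition 2.1]{Stukow17}), applied in $N-P$, if the family is not already in pairwise minimal position in $N-P$ then two of its curves cobound a bigon whose interior avoids $P$; by the standard innermost-disk argument, carried out inside $N\setminus P$, I may take such a bigon $D$ whose interior is moreover disjoint from \emph{every} curve of the family. Since $a_1,\dots,a_n$ are by hypothesis pairwise in minimal position in $N-P$, the two curves bounding $D$ cannot both be among the $a_i$, so at least one side of $D$ lies on some $b_i$. I then isotope that $b_i$ across $D$ and slightly past its opposite side, supporting the isotopy in a small neighborhood of the compact disk $\overline D$, which can be taken disjoint from the finite set $P$ since $\overline D\cap P=\varnothing$. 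This keeps $b_i$ embedded and disjoint from $P$, moves no other curve, and strictly decreases $I$: the two corners of $D$, which are intersection points of $b_i$ with the opposite side, disappear, while no new intersection of any pair is created because $\mathrm{int}(D)$ met no curve (re-perturbing to general position afterward if needed only lowers $I$ further). Iterating, $I$ strictly decreases, so after finitely many moves the family has no bigon in $N-P$, and the bigon criterion then yields that $a_1,\dots,a_n,b_1,\dots,b_m$ are pairwise in minimal position in $N-P$.

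The hard part is checking that the nonorientable ingredients genuinely behave as used. I need the bigon criterion to apply to curves bounding a M\"obius band: since $g\ge 3$ such a curve is non-null-homotopic (its free homotopy class is the square of an essential one-sided class, and $\pi_1(N_g)$ is torsion-free), and one must confirm that Stukow's criterion — or a short direct argument — covers it alongside genuine two-sided essential curves. I also need the innermost-disk extraction to produce, inside $N\setminus P$, a bigon whose interior is simultaneously clean of \emph{all} curves of the family: this is the familiar combinatorial step of passing to outermost sub-arcs of intruding curves, using that a clean sub-bigon cobounded by two of the $a_i$ is forbidden by their minimal position while a clean sub-bigon involving some $b_i$ is exactly what we isotope across, but it is the portion of the argument that needs the most care once punctures and one-sided curves are in play. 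Everything else — transversality, monotonicity of $I$, and termination — is routine and identical to the orientable case.
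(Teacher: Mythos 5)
Your bigon-removal argument is correct and is essentially the approach of \cite[Lemma 3.5]{Bowden--Hensel--Webb21}, to which the paper explicitly defers for this lemma. The concerns you flag do resolve: the bigon criterion holds for any pair of non-null-homotopic simple closed curves (so it covers boundaries of M\"obius bands when $g\geq 3$), and since the interior of an innermost bigon meets no curve of the family, no curve can cross its sides transversely away from the corners, so the push across the bigon removes the two corner intersections and creates no new ones.
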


\begin{lemma}{\rm(cf.} \cite[Lemma 3.8]{Bowden--Hensel--Webb22}{\rm)}\label{distance_of_surviving_curve_graph}
Let $N=N_{g}$ be a closed nonorientable surface of genus $g\geq 3$. Let $a,b\in\mathcal{C}^{\pm\dagger}_{\mathrm{two}}(N)$ and $P\subset N$ be a finite set. Then, we can find a geodesic $a=\nu_{0},\cdots,\nu_{k}=b$ such that $\nu_{i}\cap P=\emptyset$ for all $0<i<k$.
\end{lemma}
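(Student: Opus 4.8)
The plan is to start from an arbitrary geodesic in $\mathcal{C}^{\pm\dagger}_{\mathrm{two}}(N)$ and to repair its interior vertices one at a time by small local perturbations. The key point that makes this possible is that an edge of a fine curve graph records genuine disjointness of the two curves \emph{as subsets of $N$}, not merely disjointness up to isotopy; this is what will let us push an interior vertex off $P$ without destroying the edges to its neighbours. The argument is purely local, so it is the exact nonorientable counterpart of \cite[Lemma 3.6]{Bowden--Hensel--Webb21}, and nothing in it is sensitive to orientability.

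First I would invoke Lemma~\ref{fine_curve_graphs_are_path_connected} to know that $\mathcal{C}^{\pm\dagger}_{\mathrm{two}}(N)$ is connected, and fix any geodesic $a=\nu_0,\nu_1,\dots,\nu_k=b$, so that $k=d_{\mathcal{C}^{\pm\dagger}_{\mathrm{two}}(N)}(a,b)$; if $k\le 1$ there are no interior vertices and we are done, so assume $k\ge 2$. Then I would process the interior vertices $\nu_1,\dots,\nu_{k-1}$ in order, maintaining the running invariant that the already-processed curves are disjoint from $P$ and that the whole sequence is still a path from $a$ to $b$ of length $k$. When it is the turn of $\nu_i$, its two neighbours in the current path are joined to it by edges, hence are literally disjoint from $\nu_i$ as subsets of $N$; consequently every point $p\in\nu_i\cap P$ lies on neither neighbour. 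For each such $p$ I would choose a tiny closed disk $D_p$ about $p$ so that the $D_p$ are pairwise disjoint, each $D_p$ meets $P$ only in $p$ and is disjoint from both neighbours of $\nu_i$, and $\nu_i\cap D_p$ is a single arc through $p$; then push that arc slightly off $p$ inside $D_p$ to obtain a simple closed curve $\nu_i'$ with $\nu_i'\cap P=\emptyset$.

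It then remains to check that $\nu_i'$ is a legitimate replacement. Since $\nu_i'$ differs from $\nu_i$ only by an isotopy supported in $\bigcup_p D_p$, it is isotopic to $\nu_i$ in $N$, so it is essential and is two-sided (resp.\ bounds a M\"obius band) exactly when $\nu_i$ is; hence $\nu_i'$ is again a vertex of $\mathcal{C}^{\pm\dagger}_{\mathrm{two}}(N)$. Because the surgery takes place inside disks disjoint from both neighbours of $\nu_i$, the curve $\nu_i'$ remains disjoint from each neighbour, so replacing $\nu_i$ by $\nu_i'$ preserves the path and its length. Iterating over $i=1,\dots,k-1$ produces a path $a=\nu_0,\nu_1',\dots,\nu_{k-1}',\nu_k=b$ with $\nu_i'\cap P=\emptyset$ for every $0<i<k$; its endpoints are unchanged, so $d_{\mathcal{C}^{\pm\dagger}_{\mathrm{two}}(N)}(a,b)$ is still $k$, and a path of length $k$ realizing this distance is a geodesic.

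The only step that needs genuine care is this local push-off, for two intertwined reasons: the disk $D_p$ must be chosen small enough to miss both neighbouring curves — which is possible precisely because the disjointness encoded in the two edges puts $p$ off those neighbours — and small enough that $\nu_i\cap D_p$ is a single tame arc, so that the perturbation neither creates a new intersection with a neighbour nor changes the isotopy type of the curve. Everything else is bookkeeping, and the genus hypothesis enters only through the connectedness of $\mathcal{C}^{\pm\dagger}_{\mathrm{two}}(N)$ used at the very start.
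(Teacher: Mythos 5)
Your argument is correct and is essentially the argument the paper relies on: the paper gives no proof of its own but defers to \cite[Lemma 3.6]{Bowden--Hensel--Webb21}, whose proof is exactly this local perturbation of the interior vertices of an arbitrary geodesic --- pushing each $\nu_i$ off $P$ inside small disks disjoint from its neighbours, so that adjacency, vertex type, and the length $k=d(a,b)$ are preserved. Your added care about choosing the disks to meet $P$ only in the point being removed and to meet $\nu_i$ in a single arc is the right (and only) delicate point, and you handle it correctly.
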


\begin{remark} \label{rmk:on_lemmas}
We assume that curves are two-sided, including those bounding the M\"obius band in the statement in Lemmas~\ref{distances_are_equivalent},~\ref{distance_of_fine_curve_graph}, and~\ref{distance_of_surviving_curve_graph} because it is a convenient assumption with which to prove Proposition~\ref{fine_curve_graphs_are_unif_hyp}; these lemmas also hold for the fine curve graphs $\mathcal{C}^{\dagger}(N)$ and surviving curve graphs $\mathcal{C}^{s}(N)$.
\end{remark}

Now we can prove Proposition~\ref{fine_curve_graphs_are_unif_hyp}.

\begin{proof}[Proof of Proposition~\ref{fine_curve_graphs_are_unif_hyp}]
We will prove that, for all $u,a,b,c\in\mathcal{C}^{\pm\dagger}_{\mathrm{two}}(N)$,
\begin{equation}\label{hyperbolicity_formula}
\langle a,c\rangle_{u}\geq\mathrm{min}\{\langle a,b\rangle_{u},\langle b,c\rangle_{u}\}-\delta-4,
\end{equation}
where $\delta$ is the uniform hyperbolicity constant of $\mathcal{C}^{\pm s}_{\mathrm{two}}(N)$ in Corollary~\ref{extended_two-sided_cur_graph_is_uniformly_hyperbolic}.

We relate the vertices $u,a,b,c\in\mathcal{C}^{\pm\dagger}_{\mathrm{two}}(N)$ with the vertices $u',a',b',c'\in\mathcal{C}^{\pm s}_{\mathrm{two}}(N-P)$ satisfying the assumptions of Lemma~\ref{distances_are_equivalent}, that is, the two properties:

\begin{itemize}
\item[(i)] $d_{\mathcal{C}^{\pm\dagger}_{\mathrm{two}}(N)}(a,a')\leq 1$, $d_{\mathcal{C}^{\pm\dagger}_{\mathrm{two}}(N)}(b,b')\leq 1$, and $d_{\mathcal{C}^{\pm\dagger}_{\mathrm{two}}(N)}(c,c')\leq 1$,
\item[(ii)] the vertices $u'$, $a'$, $b'$, $c'$ are transversal.
\end{itemize}

Set $u' = u$. Find $a''$ that is disjoint from and isotopic to $a$ (note that the two-sidedness of curves is needed in this step); then, find a small enough perturbation $a'$ of $a''$ satisfying (ii) (do this for $b$ and $c$ as well).

Now, choose a finite set $P\subset N$ so that any bigon between a pair from $u'$, $a'$, $b'$, $c'$ contains a point of $P$. Then, by the bigon criterion, this ensures that $u'$, $a'$, $b'$, and $c'$ are pairwise in minimal position in $N-P$.

By Lemma~\ref{distances_are_equivalent}, for any pair $d,e\in\{a,b,c\}$ we have that
\begin{equation}\label{distance_formula}
d_{\mathcal{C}^{\pm\dagger}_{\mathrm{two}}(N)}(d',e')=d_{\mathcal{C}^{\pm s}_{\mathrm{two}}(N-P)}([d']_{N-P},[e']_{N-P}),
\end{equation}
and
\begin{equation}\label{Gromov_product_formula}
|\langle d',e'\rangle_{u'}-\langle d,e\rangle_{u}|\leq 2.
\end{equation}
Finally, we obtain formula (\ref{hyperbolicity_formula}) from (\ref{distance_formula}), (\ref{Gromov_product_formula}) above, and Corollary~\ref{extended_two-sided_cur_graph_is_uniformly_hyperbolic}.
\end{proof}

\subsection{Low genus cases}\label{Low_genus_cases}

In this subsection, we discuss the Gromov hyperbolicity of the fine curve graph $\mathcal{C}^{\dagger}(N_g)$ for $g=2$. 
Through Subsection~\ref{Low_genus_cases}, let $N=N_{g,n}$ be a nonorientable surface of genus $g\leq 2$ with $n\geq 0$ punctures.
For nonorientable surfaces $N$ of genus $g=1,2$, we modify the definition of the fine curve graph $\mathcal{C}^{\dagger}(N)$ of $N$ so that two vertices form an edge if the corresponding curves intersect at most once, and the surviving curve graph $\mathcal{C}^{s}(N)$ of $N$ so that two vertices form an edge if we can choose the representatives which intersect at most once.
Furthermore, the ordinary curve graph $\mathcal{C}(N)$ and the nonseparating curve graph $\mathcal{NC}(N)$ of $N$ of genus $g=1,2$ are also defined in the same way as $\mathcal{C}^{s}(N)$.
We see that $\mathcal{NC}(N)$ of $N$ of genus $g=1,2$ is path-connected (see~\cite{Kuno21}).

Firstly, we obtain the following:
\begin{lemma}
Let $N=N_{g,n}$ be a nonorientable surface of genus $g\leq 2$ with $n\geq 0$ punctures.
Then we have $\mathcal{C}^{s}(N)=\mathcal{NC}(N)$.
\end{lemma}

\begin{proof}
We note that any nonseparating curve is surviving; hence we have $\mathcal{NC}(N)\subset\mathcal{C}^{s}(N)$.
We will show $\mathcal{C}^{s}(N)\subset\mathcal{NC}(N)$.
We remark that any one-sided curve is surviving.
In the case where $g=1$ and $n=0,1$, there is only one isotopy class of essential simple closed curves, and it is one-sided, and so nonseparating. In particular, we see $\mathcal{C}^{s}(N)\subset\mathcal{NC}(N)$.
In the case where $g=1$ and $n\geq 2$, let $c$ be a surviving curve on $N$. Since surviving curves must essential on the surface which we fill in all punctures on $N$, $c$ must be one-sided, and so it is nonseparating.
In the case where $g=2$ and $n=0,1$, there are no essential separating curves on $N_{2,0}$ and $N_{2,1}$.
Hence if $c$ is a surviving curve on $N_{2,0}$ and $N_{2,1}$, then particularly $c$ is essential and it is nonseparating.
In the case where $g=2$ and $n\geq 2$, to prove the contrapositive, let $c$ be a separating curve on $N_{2,n}$.
Then $c$ separates $N$ to two connected components $F_{1}$ and $F_{2}$.
If $F_{1}$ contains the two M\"obius bands, then $F_{2}$ is a punctured disk, and $c$ is not surviving.
If $F_{1}$ contains exactly one crosscap, then $F_{2}$ also contains exactly one crosscap, and $c$ is not surviving.
\end{proof}

The nonseparating curve graphs $\mathcal{NC}(N)$ are uniformly hyperbolic for $g=1$ and $n\geq 2$, and for $g=2$ and $n\geq 1$ \cite{Kuno21}.
The other cases, that is, $g=1$ and $n\leq 1$, or $g=2$ and $n=0$,  the nonseparating curve graph $\mathcal{NC}(N)$ is finite and path-connected, and so it is hyperbolic.
Then we have the following corollary:

\begin{corollary}\label{hyperbolicity_surviving_curve_graph_low_genera}
For every nonorientable surface $N=N_{g,n}$ of genus $g\leq 2$ with $n\geq 0$ punctures, 
$\mathcal{C}^{s}(N)$ is Gromov-hyperbolic.
Moreover, its hyperbolicity constant can be taken to be independent of $g$ and $n$.

\end{corollary}

\begin{lemma}
Let $N=N_{g,n}$ be a nonorientable surface of genus $g=1,2$ with $n$ punctures.
Then, $\mathcal{C}(N)$ is path-connected.
\end{lemma}

\begin{proof}
Let $a$ and $b$ be two vertices of $\mathcal{C}(N)$ which intersect. In this proof, slightly abusing the notation we also consider the vertices of $\mathcal{C}(N)$ as their representative curves on $N$.
First we consider the cases where $g=1$ and $n\geq 2$, or $g=2$ and $n\geq 1$. Since $\mathcal{NC}(N)$ is path-connected, we assume that at least one of $a$ and $b$ is separating. In the case where $g=1$, if $a$ is a separating curve, then the exactly one component $M$ of $N\setminus a$ contain the unique M\"obius band. Let $a'$ be a one-sided curve in $M$. Then $a'$ is disjoint from $a$. If $b$ is also separating, then we can also take a one-sided curve $b'$ which is disjoint from $b$ by the same way. Since both $a'$ and $b'$ are nonseparating, we can connect $a'$ and $b'$ by a path in $\mathcal{NC}(N)$. Therefore we can also connect $a$ and $b$ by a path in $\mathcal{C}(N)$. In the case where $g=2$, we use a similar argument above and we can prove $\mathcal{C}(N)$ is path-connected.

Second, we consider the case where $g=1$ and $n\leq 1$, or $g=2$ and $n=0$.
In these cases, $\mathcal{C}(N)=\mathcal{NC}(N)$, and hence we see that $\mathcal{C}(N)$ is path-connected. 

\end{proof}

\begin{lemma}\label{path-conn_extended_curve_graph_low_genus}
Let $N=N_{2,n}$ be a nonorientable surface of genus $2$ with $n$ punctures.
Then, $\mathcal{C}^{\pm}(N)$ is path-connected.
\end{lemma}

\begin{proof}
Let $a$ and $b$ be two vertices of $\mathcal{C}^{\pm}(N)$ which intersect. If both $a$ and $b$ does not bound a M\"obius band, then we can connect $a$ and $b$ by a path in $\mathcal{C}(N)$. We assume that at least one of $a$ and $b$ bounds a M\"obius band. If $a$ bounds a M\"obius band $M$, then we can take an essential curve $a'$ isotopic to the core curve of $M$ such that $i(a,a')=0$. If $b$ bounds a M\"obius band $M'$, we can also take  an essential curve $b'$ isotopic to the core curve of $M'$ such that $i(b,b')=0$. Since $a'$ and $b'$ are both essential, we can connect $a'$ and $b'$ by a path in $\mathcal{C}(N)$. Hence we can also connect $a$ and $b$ by a path in $\mathcal{C}^{\pm}(N)$.
\end{proof}

\begin{lemma}
Let $N=N_{2,n}$ be a nonorientable surface of genus $2$ with $n$ punctures.
Then, $\mathcal{C}^{\dagger}(N)$ and $\mathcal{C}^{\pm\dagger}(N)$ are path-connected.
\end{lemma}

\begin{proof}
We can prove by the same argument as that of Proposition~\ref{fine_curve_graphs_are_path_connected} for $\mathcal{C}^{\dagger}(N)$ and $\mathcal{C}^{\pm\dagger}(N)$ of genus $g\geq 3$ by using the path connectedness of $\mathcal{C}^{\pm}(N)$ (Lemma~\ref{path-conn_extended_curve_graph_low_genus}).
\end{proof}

We remark that the properties corresponding to Lemmas~\ref{distances_are_equivalent},~\ref{distance_of_fine_curve_graph}, and~\ref{distance_of_surviving_curve_graph} also hold for the fine curve graph $\mathcal{C}^{\dagger}(N)$ and the surviving curve graph $\mathcal{C}^{s}(N)$ of a closed nonorientable surface of genus 2.
Thus we obtain the following result.

\begin{theorem}\label{hyperbolicity_fine_curve_graph_genus_two}
Let $N=N_{2}$ be a closed nonorientable surface of genus $2$, and 
$\delta$ be the hyperbolicity constant of $\mathcal{C}^{s}(N)$ in Corollary~$\ref{hyperbolicity_surviving_curve_graph_low_genera}$.
Then, the fine curve graph $\mathcal{C}^{\dagger}(N)$ is $(\delta+4)$-hyperbolic.
\end{theorem}

The proof of Theorem~\ref{hyperbolicity_fine_curve_graph_genus_two} is the same as the proof of Proposition~\ref{fine_curve_graphs_are_unif_hyp}, but the condition (i) in the proof holds also for the one sided curves since we can connect two vertices by one edge when the corresponding curves intersect at most once for genus $g=1,2$.

\subsection{Topological fine curve graph}\label{Topological fine curve graph}

A {\it topological fine curve graph} $\mathcal{C}^{\dagger}(N)_\mathrm{top}$ is the graph whose vertex is a topologically-embedded essential simple closed curve and two vertices form an edge if the corresponding curves are disjoint. In this subsection, we prove that the uniform hyperbolicity of $\mathcal{C}^{\dagger}(N)_\mathrm{top}$ for $N=N_{g}$ ($g\geq 3$).

By the same argument in the proofs of Proposition~\ref{fine_curve_graphs_are_path_connected} and Proposition~\ref{fine_curve_graphs_are_quasi-isometric}, we have the following.
\begin{proposition}\label{top_fine_curve_graphs_are_quasi-isometric}
Let $N=N_{g,p}$ be a nonorientable surface of genus $g\geq 3$ with $p\geq 0$ punctures. Then, $\mathcal{C}^{\dagger}(N)_\mathrm{top}$, $\mathcal{C}^{\pm \dagger}(N)_\mathrm{top}$, and $\mathcal{C}^{\pm \dagger}_{\mathrm{two}}(N)_\mathrm{top}$ are path-connected, and
\begin{equation*}
 \mathcal{C}^{\dagger}(N)_\mathrm{top} \qisim \mathcal{C}^{\pm \dagger}(N)_\mathrm{top} \qisim \mathcal{C}^{\pm\dagger}_{\mathrm{two}}(N)_\mathrm{top}.
\end{equation*}
Moreover, we can take quasi-isometries between them so that their quasi-isometric constants do not depend on $g$ and $n$.    
\end{proposition}

Similar to~\cite[Lemma 3.11]{Bowden--Hensel--Webb22} we also obtain the following lemma:
\begin{lemma}\label{top_fine_curve_graph_is_qi_to_fine_curve_graph}
Let $N=N_{g}$ be a nonorientable surface of genus $g\geq 3$. Then, the inclusion map $\mathcal{C}^{\pm\dagger}_{\mathrm{two}}(N)\hookrightarrow\mathcal{C}^{\pm\dagger}_{\mathrm{two}}(N)_\mathrm{top}$ is $(1,2)$-quasi-isometric embedding and $1$-dense.

\end{lemma}

Moreover, for a closed nonorientable surface of genus 2, we see the following:
\begin{lemma}\label{top_fine_curve_graph_is_qi_to_fine_curve_graph_genus_two}
Let $N=N_{2}$ be a nonorientable surface of genus $2$. Then, the inclusion map $\mathcal{C}^{\dagger}(N)\hookrightarrow\mathcal{C}^{\dagger}(N)_\mathrm{top}$ is $(1,2)$-quasi-isometric embedding and $1$-dense.
\end{lemma}

The proofs of Lemmas~\ref{top_fine_curve_graph_is_qi_to_fine_curve_graph} and~\ref{top_fine_curve_graph_is_qi_to_fine_curve_graph_genus_two} are the same as that of Bowden, Hensel, and Webb~\cite[Lemma 3.11]{Bowden--Hensel--Webb22}, so we omit the proofs.

By Proposition~\ref{top_fine_curve_graphs_are_quasi-isometric}, Lemmas~\ref{top_fine_curve_graph_is_qi_to_fine_curve_graph} and~\ref{top_fine_curve_graph_is_qi_to_fine_curve_graph_genus_two}, and the uniformly hyperbolicity of $\mathcal{C}^{\pm\dagger}_{\mathrm{two}}(N)$, we obtain the following result:
\begin{corollary}
Let $N=N_{g}$ be a nonorientable closed surface of genus $g\geq 2$.
Then, the topological fine curve graph $\mathcal{C}^{\dagger}(N)_\mathrm{top}$ is Gromov-hyperbolic.
Moreover, its hyperbolicity constant can be taken to be independent of $g$. 
\end{corollary}

\section{Construction of quasimorphisms} \label{sec:qm}

In this section, we prove Theorems \ref{thm:main} and \ref{thm:homeo}.
Our strategy for proving these theorems is the same as for the case of orientable surfaces \cite{Bowden--Hensel--Webb22}.

\begin{lemma} \label{lem:hyp}
 
 Let $N$ be a closed nonorientable surface and $P \subset N$ be a set of finite points. Let $\psi \in {\rm Diff}(N)$ be a diffeomorphism such that $\varphi(P)=P$ and set $f=[\psi]_{N_g-P} \in {\rm MCG}(N-P)$. If $f$ is pseudo-Anosov, then $\psi$ is a hyperbolic element with respect to the action on $\mathcal{C}^{\dagger}(N)$.
\end{lemma}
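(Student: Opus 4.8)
The plan is to transfer the known dynamics of the pseudo-Anosov mapping class $f=[\psi]_{N_g-P}$ on the curve complex $\mathcal{C}(N_g-P)$ (equivalently, by Lemma~\ref{distances_are_equivalent} and Remark~\ref{rmk:on_lemmas}, on $\mathcal{C}^{s}(N_g-P)$) to a lower bound on the translation length of $\psi$ acting on $\mathcal{C}^{\dagger}(N_g)$. The key point is that, via the Wu/Thurston correspondence and the injection $\iota\colon\mathrm{MCG}(N_g-P)\to\mathrm{MCG}(S-\widetilde P)$, a pseudo-Anosov mapping class acts on its curve complex with positive translation length (this is Masur--Minsky-type; here one can quote uniform hyperbolicity of $\mathcal{NC}$ together with the standard fact that pseudo-Anosovs are loxodromic on the curve graph). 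So there is a constant $L>0$ and a surviving curve $\alpha$ with $d_{\mathcal{C}^{s}(N_g-P)}([\alpha],f^n[\alpha])\geq L n - C$ for all $n\geq 0$.

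First I would fix a two-sided essential curve $\alpha$ in $N_g$, disjoint from $P$, whose isotopy class in $N_g-P$ is surviving (a nonseparating curve works). Since $\psi$ permutes $P$, the diffeomorphism $\psi^n$ carries $\alpha$ to $\psi^n(\alpha)$, an actual curve in $N_g$ disjoint from $P$, whose isotopy class in $N_g-P$ is exactly $f^n[\alpha]_{N_g-P}$. Now apply Lemma~\ref{distances_are_equivalent} (in the surviving curve graph version of Remark~\ref{rmk:on_lemmas}) with the curves $\alpha$ and $\psi^n(\alpha)$, perturbed if necessary so that they are transverse and in minimal position in $N_g-P$: this gives
\begin{equation*}
d_{\mathcal{C}^{\dagger}(N_g)}(\alpha,\psi^n(\alpha)) = d_{\mathcal{C}^{s}(N_g-P)}([\alpha]_{N_g-P},f^n[\alpha]_{N_g-P}) \geq Ln - C .
\end{equation*}
Since $\psi^n(\alpha)=\psi^n\cdot\alpha$ in the action of $\mathrm{Diff}(N_g)$ on $\mathcal{C}^{\dagger}(N_g)$, this says $d_{\mathcal{C}^{\dagger}(N_g)}(\alpha,\psi^n\alpha)\geq Ln-C$, hence
\begin{equation*}
|\psi|_{\mathcal{C}^{\dagger}(N_g)} = \lim_{n\to\infty}\frac{d_{\mathcal{C}^{\dagger}(N_g)}(\alpha,\psi^n\alpha)}{n} \geq L > 0,
\end{equation*}
which is exactly the assertion that $\psi$ is hyperbolic with respect to the action on $\mathcal{C}^{\dagger}(N_g)$.

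The main obstacle is the bookkeeping around Lemma~\ref{distances_are_equivalent}: one must make sure the curves $\alpha$ and $\psi^n(\alpha)$ can be put in minimal position in $N_g-P$ simultaneously (using the bigon criterion and the freedom to enlarge $P$, exactly as in the proof of Proposition~\ref{fine_curve_graphs_are_unif_hyp}), and that the resulting isotopy class in $N_g-P$ is genuinely $f^n$ applied to the class of $\alpha$ — this uses that $\psi$ is an \emph{actual} diffeomorphism inducing $f$, so its iterates literally realize the mapping-class action on isotopy classes in the punctured surface. A secondary, more conceptual, point is invoking that a pseudo-Anosov element of $\mathrm{MCG}(N_g-P)$ acts loxodromically (with positive, and in fact uniform, translation length) on the surviving/nonseparating curve graph; this follows from the hyperbolicity statements already cited (Theorem~\ref{unif_hyp_of_nonseparating_curve_graph}) together with the fact that no power of a pseudo-Anosov fixes an essential curve, so its orbits in the hyperbolic graph $\mathcal{C}^{s}(N_g-P)$ are unbounded and therefore (by hyperbolicity and the classification of isometries) loxodromic. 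Everything else is a routine limit computation.
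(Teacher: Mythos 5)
Your overall strategy --- convert a lower bound on the translation length of $f$ on the surviving/nonseparating curve graph of $N_g-P$ into a lower bound on the translation length of $\psi$ on $\mathcal{C}^{\dagger}(N_g)$ --- is the same as in the paper, and the bookkeeping in your first two paragraphs is essentially sound: passing from the fine curve graph to the (surviving) curve graph of the punctured surface via Lemma~\ref{distances_are_equivalent} (or, as the paper does, via Lemma~\ref{distance_of_surviving_curve_graph}, which only gives an inequality but is enough and avoids the minimal-position bookkeeping), and using that $\psi^n$ literally realizes the mapping-class action $f^n$ on isotopy classes.

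The gap is in your final paragraph, where you claim that loxodromicity of $f$ on $\mathcal{C}^{s}(N_g-P)$ follows from hyperbolicity of that graph together with the fact that no power of a pseudo-Anosov fixes an essential curve. Neither implication in that chain holds for a general isometry of a general hyperbolic space: an element with no fixed vertex may still have bounded orbits (a ball of a curve graph contains infinitely many vertices, so ``no fixed curve'' does not force an unbounded orbit), and an element with unbounded orbits may be parabolic, i.e.\ have translation length zero. Ruling out parabolics for mapping class groups acting on curve graphs is a genuine theorem (Masur--Minsky, or Bowditch acylindricity), proved for orientable surfaces, and cannot be obtained from the classification of isometries alone. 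This is exactly the issue the paper's proof is designed to sidestep: it lifts to the orientable double cover $\pi\colon S\to N$, uses that the pullback $\pi^{\ast}_P\colon\mathcal{C}(N_g-P)\to\mathcal{C}(S-\widetilde P)$ is $1$-Lipschitz (Masur--Schleimer) to obtain
\begin{equation*}
d_{\mathcal{C}(N-P)}\bigl([a],\,f^k[a]\bigr)\ \geq\ d_{\mathcal{C}(S-\widetilde P)}\bigl([\tilde a],\,\tilde f^k[\tilde a]\bigr)-1,
\end{equation*}
and then invokes the Masur--Minsky positivity of translation length for the lift $\tilde f$, which is pseudo-Anosov in the \emph{orientable} mapping class group $\mathrm{MCG}(S-\widetilde P)$. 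You gesture at this lifting route in your opening sentence but never actually carry it out; if you do, you recover the paper's argument. As written, ``unbounded orbit, therefore loxodromic by the classification of isometries'' is a real gap and must be replaced by the double-cover argument (or by an explicit citation to acylindricity/WPD for the nonorientable curve graph, which the paper does not assume is available).
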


\begin{proof}
  Let $a,b$ be two vertices in $\mathcal{C}_{\rm two}^{\pm \dagger}(N)$ such that the curves $a,b$ are in $N - P$.
  By using Lemma \ref{distance_of_surviving_curve_graph} (see also Remark \ref{rmk:on_lemmas}), we can obtain that
  \begin{equation} \label{eq:qi_3}
    d_{\mathcal{C}^{s}(N-P)}([a]_{N-P},[b]_{N-P}) \leq d_{\mathcal{C}^{\dagger}(N)}(a,b).
  \end{equation}
  Thus, by inequality \eqref{eq:qi_3} and since
  the inclusion map $\mathcal{C}^s(N-P) \to \mathcal{C}(N-P)$ is 1-Lipschitz,
  \begin{equation*} 
      d_{\mathcal{C}(N-P)}([a]_{N-P},f^k( [a]_{N-P})) \leq  d_{\mathcal{C}^{\dagger}(N)}(a,\psi^k(a))
  \end{equation*}
  for every $k \in \mathbb{Z}$. Note that $f^k ([a]_{N-P})= [\psi^k(a)]_{N-P}$.
Hence we obtain $ |\varphi|_{\mathcal{C^\dagger}(N)} \geq |f|_{\mathcal{C}(N-P)}$.
Since $f$ is pseudo-Anosov, 
$|f|_{\mathcal{C}(N-P)}>0$ (see \cite{Bestvina--Fujiwara07}).
Thus we complete the proof.
\end{proof}

\begin{lemma} \label{lem:pA}
  Let $N$ be a closed nonorientable surface of genus $g \geq 3$ and $p \in N$ be a point. Then there exists $\psi \in {\rm Diff}_0(N)$ such that $\psi(p)=p$ and $[\psi]_{N-p}$ is pseudo-Anosov.
\end{lemma}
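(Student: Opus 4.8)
The plan is to produce the desired $\psi$ by applying the Ichihara--Motegi criterion (Theorem~\ref{thm:IM}) on the orientation double cover, and then pushing the construction down to $N$. First I would fix the orientation double covering $\pi\colon S\to N$ with deck transformation $\tau$, and set $\pi^{-1}(P)=\{\tilde P_1,\tilde P_2\}$. The strategy is to build a diffeomorphism $\tilde\psi\in{\rm Diff}_0(S)$, equivariant with respect to $\tau$ (so that it descends to some $\psi\in{\rm Diff}_0(N)$ with $\psi(P)=P$), whose associated system of curves on $S$ is a $\tau$-invariant filling system satisfying the three bullet hypotheses of Theorem~\ref{thm:IM} (primitive, pairwise non-homotopic, not homotopic into the boundary — the last being vacuous here since $S$ is closed). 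Then $[\tilde\psi]_{S-\{\tilde P_1,\tilde P_2\}}$ is pseudo-Anosov, hence $[\psi]_{N-P}$ is pseudo-Anosov by Wu's theorem (\cite{Wu87}), since the mapping class on $N-P$ is pseudo-Anosov precisely when its lift to the double cover $S-\{\tilde P_1,\tilde P_2\}$ is.

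Concretely, I would choose an isotopy $\{\varphi_t\}_{0\le t\le1}$ from ${\rm id}_N$ to $\psi$ by prescribing the trajectory $c=\{\varphi_t(P)\}_{0\le t\le1}$ to be a filling closed curve on $N$ based at $P$; given such a loop one can always realize it as a point-pushing isotopy. The key point is to arrange two things simultaneously: (a) $c$ is filling on $N$, and (b) the preimage $\pi^{-1}(c)$ on $S$ is a filling system of curves meeting the conditions of Theorem~\ref{thm:IM}. For (b), note $\pi^{-1}(c)$ consists of either one curve (if $c$ is orientation-reversing, i.e., represents a non-trivial element of $\pi_1(N)\to\mathbb Z/2$) or two curves; in the one-curve case I would further arrange that its single lift is primitive and filling on $S$. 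Since $g\ge 3$, the surface $N$ (and hence $S$, which has genus $g-1\ge 2$) is rich enough to support such curves; one can write down an explicit filling curve on $N_g$ in terms of a one-sided curve together with standard generators and check that it lifts appropriately. That $c$ filling on $N$ forces $\pi^{-1}(c)$ filling on $S$ is immediate from the characterization ``$F\setminus c$ a union of disks'' together with the fact that $\pi$ is a covering: components of $S\setminus\pi^{-1}(c)$ cover components of $N\setminus c$, and a connected cover of a disk (by a surface with no boundary issues here) is a disk.

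The main obstacle I expect is the verification of the technical bullets in Theorem~\ref{thm:IM} for the lifted system — namely that the lifted curve(s) are primitive, pairwise non-homotopic, and that the system is \emph{strongly} filling in the sense required (every equivalent system is filling). The cleanest route is to exhibit one sufficiently generic explicit filling loop on $N_g$ for each $g\ge3$ (or a single loop on $N_3$ and stabilize), compute its lift, and verify these conditions directly; genericity (e.g., taking $c$ to fill and to have connected, primitive lift with the complement being a single disk) makes the bullets automatic, since a filling curve whose complement is a disk is certainly primitive and, being a single curve, the pairwise condition is vacuous. Once the double-cover mapping class is pseudo-Anosov, transferring back to $N-P$ via Wu's equivalence completes the proof.
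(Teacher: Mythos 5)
Your proposal follows essentially the same route as the paper's proof: choose a filling loop $\gamma$ based at $P$ in $N$ whose preimage $\pi^{-1}(\gamma)$ in the orientation double cover $S$ is connected (the paper arranges this ``by adding a based loop at $P$ if necessary''), take $\psi$ to be the point-push of $P$ along $\gamma$, lift to $\tilde\psi\in{\rm Diff}_0(S)$, apply Theorem~\ref{thm:IM} to conclude $[\tilde\psi]_{S-\pi^{-1}(P)}$ is pseudo-Anosov, and descend to $[\psi]_{N-P}$ via Wu's criterion. Your version supplies somewhat more detail on why the connected preimage is filling and on checking the technical hypotheses of Theorem~\ref{thm:IM}, but the core construction and sequence of lemmas invoked are identical.
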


\begin{proof}
  Let $\pi \colon S\to N$ be the double covering. Let $\gamma$ be a filling curve in $N$ based at $p$.
  We can assume that $\pi^{-1}(\gamma)$ is connected (by adding a based loop at $p$ to $\gamma$ if necessary).
  Then $\tilde{\gamma}=\pi^{-1}(\gamma)$ is a filling curve in $S$.
  Let $\psi \in {\rm Diff}_0(N)$ be the point pushing map of $p$ along $\gamma$, and $\tilde{\psi} \in {\rm Diff}_0(S)$ the lift of $\psi$. 
  Since $\chi(S)<0$ and by Theorem \ref{thm:IM}, $[\tilde{\psi}]_{S-\pi^{-1}(p)}$ is pseudo-Anosov and $[\psi]_{N-p}$ is also.
\end{proof}

\begin{proof}[Proof of Theorem \ref{thm:main}]
  Let ${\rm pr} \colon \mathcal{C}^{\dagger}(N) \to \mathcal{C}(N)$ be the natural projection.
  Let $p \in N$ be a point and $\psi \in {\rm Diff}_0(N)$ a diffeomorphism as in Lemma \ref{lem:pA}.
  Then, by Lemma \ref{lem:hyp}, the diffeomorphism $\psi$ is a hyperbolic element.

  Let $a \in \mathcal{C}^{\dagger}(N)$ be a vertex. For some $K,L \geq 0$, the set $A_{\psi}=\{ \psi^n (a) \}_{n \in \mathbb{N}}$ is a $(K,L)$-quasi-axis. Note that ${\rm pr}(A_{\psi})=\{ [a]_N \}$ is a singleton.

  We take a pseudo-Anosov map $\varphi \in {\rm Diff}(N)$. For every $k \in \mathbb{Z}$, the map $\varphi^k \psi \varphi^{-k} \in {\rm Diff}_0(N)$ is a hyperbolic element.
  Then $A_{\varphi^k \psi \varphi^{-k}}=\{ (\varphi^k \psi \varphi^{-k})^n (\varphi^k(a)) \}_{n \in \mathbb{N}}$ is also a $(K,L)$-quasi-axis and ${\rm pr}(A_{\varphi^k \psi \varphi^{-k}})=\{ \varphi^k ([a]_N) \}$ is also a singleton.

  Then, for a sufficiently large $k \in \mathbb{Z}$,
  \[d_{\mathcal{C}(N)}([a]_N,  [\varphi^k(a)]_N ) > B(K,L,\delta)\]
  since $\varphi$ is pseudo-Anosov. Thus, for any $x \in A_{\psi}$ and $\phi \in {\rm Diff}_0(N)$,
\begin{align*}
    d_{\mathcal{C}^{\dagger}(N)} ( \phi(x), A_{\varphi^k \psi \varphi^{-k} } )
  &\geq  d_{\mathcal{C}(N)}([\phi(a)]_N,  [\varphi^k(a)]_N ) \\
  &= d_{\mathcal{C}(N)}([a]_N,  [\varphi^k(a)]_N ) > B(K,L,\delta).
\end{align*}
This shows that $g_1= \psi$ and $g_2 = \varphi^k \psi \varphi^{-k}$ satisfies the assumption of Proposition \ref{prop:BF}. Therefore, by Proposition \ref{prop:BF} and Theorem \ref{unif_hyp_fine_curve_graph}, we complete the proof.
\end{proof}

\begin{remark}
Although 
the graph $C^{\dagger}(N_2)$ is hyperbolic (Theorem \ref{hyperbolicity_fine_curve_graph_genus_two}), we cannot adapt the argument in the proof of Theorem \ref{thm:main} directly for the genus 2 case since the Klein bottle $N_2$ does not admit any pseudo-Anosov diffeomorphism.
We remark that B\"{o}ke recently proved that the space $\widetilde{QH}(\mathrm{Diff}_0(N_2))$ is non-trivial \cite{Boke}.
\end{remark}

Finally, we consider the case of homeomorphism groups. 
The following theorem is due to Kotschick.
\begin{theorem}[Kotschick] \label{thm:conti}
Let $F$ be a closed surface. 
Then every homogeneous quasimorphism on ${\rm Diff}_0(F)$ is continuous with respect to the $C^0$-topology.
\end{theorem}
See \cite[Appendix A.]{Bowden--Hensel--Webb22} for more details. 
We note that in \cite{Bowden--Hensel--Webb22} they treat the case of orientable surfaces but their argument also goes through for nonorientable surfaces; the discussion is local except for triangulation, and nonorientable surfaces can also be triangulated. Since every diffeomorphism on a closed surface is uniformly approximated by diffeomorphisms, every homogeneous quasimorphism on ${\rm Diff}_0(F)$ extends to ${\rm Homeo}_0(F)$ and we obtain Theorem \ref{thm:homeo}. 
As we mentioned in introduction, Theorem \ref{thm:homeo} can also be proved by using the topological fine curve graph $C^{\dagger}(N)_{\rm top}$ directly.

\section*{Acknowledgment}
The authors wish to express their great appreciation to Genki Omori for valuable comments on mapping class groups of nonorientable surfaces.
The authors are also deeply grateful to Kazuhiro Ichihara for answering their questions about filling curves.
The authors would also like to thank Jonathan Bowden for his comments on the genus 2 case.

The first author is supported by JSPS KAKENHI Grant Number JP20H00114 and JP24K16921, and JST-Mirai Program Grant Number JPMJMI22G1.
The second author is supported by the Foundation of Kinoshita Memorial Enterprise, by a JSPS KAKENHI Grant-in-Aid for Early-Career Scientists, Grant Number 21K13791, and by JST, ACT-X, Grant Number JPMJAX200D.

\end{document}